 \numberwithin{equation}{section}
\def\bC{{\mathbb{C}}}
\def\bR{{\mathbb{R}}}
\def\R{{\mathbb{R}}}
\def\cC{{\mathscr{C}}}
\def\cH{{\mathscr{H}}}
\def\ve{\varepsilon}
\renewcommand{\d}{{\partial}}
\def\lec{\lesssim}
\def\gec{\gtrsim}
\DeclareMathOperator{\diam}{diam}
\def\dist{\mathop\mathrm{dist}} 						
\newcommand{\ps}[1]{\left( #1 \right)}
\newcommand{\av}[1]{\left| #1 \right|}
\newcommand{\cnj}[1]{\overline{#1}}
\def\Xint#1{\mathchoice
{\XXint\displaystyle\textstyle{#1}}%
{\XXint\textstyle\scriptstyle{#1}}%
{\XXint\scriptstyle\scriptscriptstyle{#1}}%
{\XXint\scriptscriptstyle\scriptscriptstyle{#1}}%
\!\int}
\def\XXint#1#2#3{{\setbox0=\hbox{$#1{#2#3}{\int}$ }
\vcenter{\hbox{$#2#3$ }}\kern-.58\wd0}}
\def\avint{\Xint-}
\def\grad{\nabla}
\theoremstyle{plain}
\newtheorem{theorem}{Theorem}
\newtheorem{corollary}[theorem]{Corollary}
\newtheorem{lemma}[theorem]{Lemma}
\theoremstyle{definition}
\newtheorem{definition}[theorem]{Definition}
\newtheorem{remark}[theorem]{Remark}
\numberwithin{equation}{section}
\numberwithin{theorem}{section}
\newtheorem{main}{Theorem}
  \DeclareFontFamily{U}{mathb}{\hyphenchar\font45} 
\DeclareFontShape{U}{mathb}{m}{n}{
      <5> <6> <7> <8> <9> <10> gen * mathb
      <10.95> mathb10 <12> <14.4> <17.28> <20.74> <24.88> mathb12
      }{}
\DeclareSymbolFont{mathb}{U}{mathb}{m}{n}
\DeclareMathSymbol{\toitself}{3}{mathb}{"FD}  
\begin{document}

\title[Accessible parts of the boundary]{Accessible parts of the boundary for domains with lower content regular complements}

\author[Azzam]{Jonas Azzam}
\address{Jonas Azzam\\
School of Mathematics \\ University of Edinburgh \\ JCMB, Kings Buildings \\
Mayfield Road, Edinburgh,
EH9 3JZ, Scotland.}
\email{j.azzam "at" ed.ac.uk}

\subjclass[2010]{
28A75, 
46E35, 
26D15, 
}

\begin{abstract}
We show that if $0<t<s\leq n-1$, $\Omega\subseteq \R^{n}$ with lower $s$-content regular complement, and $z\in \Omega$, there is a chord-arc domain $\Omega_{z}\subseteq \Omega $ with center $z$ so that $\cH^{t}_{\infty}(\d\Omega_{z}\cap \d\Omega)\gec_{t} \dist(z,\Omega^{c})^{t}$. This was originally shown by Koskela, Nandi, and Nicolau with John domains in place of chord-arc domains when $n=2$, $s=1$, and $\Omega$ is a simply connected planar domain. 

Domains satisfying the conclusion of this result support $(p,\beta)$-Hardy inequalities for $\beta<p-n+t$ by a result of Koskela and Lehrb\"{a}ck; Lehrb\"{a}ck also showed that $s$-content regularity of the complement for some $s>n-p+\beta$ was necessary. Thus, the combination of these results characterizes when a domain supports a pointwise $(p,\beta)$-Hardy inequality for $\beta<p-1$ in terms of lower content regularity.


\end{abstract}
\maketitle
\tableofcontents

\section{Introduction}

In this note we study how accessible the boundary of a connected domain $\Omega\subseteq \bR^{n}$ is under certain nondegeneracy conditions on the boundary. By virtue of being connected, all points in the boundary are trivially accessible by a curve, but in some applications it is more important to have some non-tangential accessibility on a non-trivial portion of the boundary.

For a domain $\Omega$, $x\in \Omega$, and $c>0$, we say $\Omega$ is {\it $c$-John with center $x\in \Omega$} if every $y\in \cnj{\Omega}$ is connected to $x$ by a curve $\gamma$ so that
\begin{equation}
\label{e:john}
c\cdot  \ell(y,z)\leq \delta_{\Omega}(z):= \dist(z,\Omega^{c})\mbox{ for all $z\in \gamma$}\end{equation}
where $\ell(y,z)$ denotes the length of the subarc of $\gamma$ from $y$ to $z$. In this way, every point $y$ in the domain is non-tangentially accessible from $x$, that is, there is a curve about which the domain does not pinch as it approaches $y$.  We will let $v_{x}(c)$ denote the {\it $c$-visual boundary}, that is, the set of $z\in \d\Omega$ for which there is a curve $\gamma$ satisfying \eqref{e:john}.
 
Of course, most domains are not John and could pinch at many points in the boundary. However, if $\d\Omega$ is infinite, one can see that $v_{x}(c)$ should be infinite as well. It's natural to ask then how big the visual boundary can be.
%
%
%
%
%

Our main result states that, if the complement has large $s$-dimensional content uniformly with $s\leq n-1$, then the visual boundary also has large content with respect to any dimension less than $s$. In fact, we show that for any $t<s$, there is even a chord-arc subdomain intersecting a large $t$-dimensional portion of the boundary.

%

\begin{main}
\label{t:main}
Let $0<s\leq  n-1$, and suppose $\Omega\subseteq \mathbb{R}^{n}$ has lower $s$-content regular complement, meaning there is $c_0>0$ so that
\begin{equation}
\label{e:lcr}
\cH^{s}_{\infty}(B(x,r)\backslash \Omega)\geq c_0r^{s} \mbox{ for all }x\in \d\Omega, \;\; 0<r<\diam \d\Omega.
\end{equation}
Then for every $0<t<s$, $\Omega$ has {\it big $t$-pieces of chord-arc subdomains} (or $BPCAS(t)$), meaning there is $C=C(s,t,n,c_{0})>0$ so that for all $x\in \Omega$ with $\dist(x,\d\Omega)< \diam \d\Omega$, there is a $C$-chord-arc domain $\Omega_{x}$ with center $x$ so that 
\[
\cH^{t}_{\infty}(\d\Omega\cap \d\Omega_{x})\geq C^{-1} \delta_{\Omega}(x)^{t}.
\]
In particular, there is $c=c(s,t,n,c_{0})>0$ so that 
\begin{equation}
\label{e:visual}
\cH^{t}_{\infty}(v_{x}(c))\geq C^{-1} \delta_{\Omega}(x)^{t}
\end{equation}
\end{main}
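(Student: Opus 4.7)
The plan is to build $\Omega_x$ as a union of (slight enlargements of) Whitney cubes of $\Omega$, organized as a pruned tree rooted at the Whitney cube $Q_0$ containing $x$; the pruning will secure the chord-arc property, and the remaining reachable boundary will account for the $t$-content lower bound. By scaling I may assume $\delta_\Omega(x)=1$, and I fix $\hat x\in\d\Omega$ with $|x-\hat x|=1$; the whole construction localizes in $B(\hat x,C)$ for a large constant $C=C(s,t,n,c_0)$.

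\textbf{Tree and pruning.} Fix a Whitney decomposition $\cW$ of $\Omega$. Starting from $Q_0$, I grow a tree $\cT\subseteq\cW$ one dyadic scale at a time: the candidate children of $Q\in\cT$ are the Whitney cubes $Q'\in\cW$ of roughly half the side length of $Q$ that neighbor $Q$ and lie closer to $\d\Omega$. Among these candidates I discard (``prune'') any whose inclusion would cause the partial union $\bigcup_{Q\in\cT} Q$ to develop a narrow corridor or to bring two separate subtrees closer than a fixed fraction of their common scale. Define $\Omega_x$ as the interior of the closure of a uniform thickening of $\bigcup_{Q\in\cT} Q$. Because the tree has bounded branching and the pruning rule enforces a uniform separation, $\d\Omega_x$ will be Ahlfors $(n-1)$-regular and $\Omega_x$ will be NTA, yielding chord-arc with a constant depending only on $s,t,n,c_0$.

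\textbf{Reaching a $t$-content-large set of $\d\Omega$.} Let $E=\d\Omega\cap\d\Omega_x$; these are the boundary points reached by infinite branches of $\cT$. To bound $\cH^t_\infty(E)$ from below, I group the boundary points in $\d\Omega\cap B(\hat x,1)$ at each scale $2^{-k}$ by the closest Whitney cube of side $\sim 2^{-k}$. A pruning event at scale $2^{-k}$ is always triggered by a pinch: a piece of $\Omega^c$ separating two parts of the growing tree. By the lower $s$-content regularity of $\Omega^c$, each such pinch can be charged a definite $s$-content $\gec(2^{-k})^s$. When the blocked portion of $\d\Omega$ at scale $2^{-k}$ is converted back into a $t$-content loss, each pinch contributes $\lesssim(2^{-k})^t$, and an $s$-content packing estimate using \eqref{e:lcr} bounds the number of pinches at scale $k$. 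Summing gives a total $t$-content loss $\lesssim\sum_k 2^{-k(s-t)}<\infty$, convergent because $t<s$; calibrating the pruning threshold, this sum is strictly less than the baseline $t$-content of $\d\Omega\cap B(\hat x,1)$, itself $\gec1$ from \eqref{e:lcr} at scale $1$. Hence $\cH^t_\infty(E)\gec 1$.

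\textbf{Main obstacle and conclusion.} The delicate point is the simultaneous calibration of the pruning rule so that (i) it is strict enough to guarantee NTA and surface-measure bounds for $\Omega_x$, and (ii) it is lax enough that every pruned branch can be absorbed into the $s$-content budget provided by \eqref{e:lcr}. The strict gap $t<s$ is exactly what supplies the convergent geometric factor $2^{-k(s-t)}$ needed to balance these two demands; this is the crux of the proof. Once $\Omega_x$ and $E$ are in hand, \eqref{e:visual} is immediate by taking $c$ to be the John constant of $\Omega_x$: every $z\in E$ is accessed by a John curve from $x$ through the tree, so $E\subseteq v_x(c)$.
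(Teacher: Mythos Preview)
Your outline has a genuine gap at the branching step, and the accounting in the ``pinch'' argument does not close.

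First, the tree you describe need not branch fast enough. Passing from a Whitney cube $Q$ to its half-scale neighbors closer to $\d\Omega$ gives only $O_n(1)$ children per node; nothing in your construction forces $\gtrsim 2^{t}$ of them to survive at every generation, which is what you need for a $t$-Frostmann measure on the limit set. Your subtractive picture (``start with all of $\d\Omega\cap B(\hat x,1)$, subtract what pruning blocks'') presupposes that without pruning the Whitney tree already reaches essentially every boundary point in the unit ball by a chain of half-scale neighbors, and that is simply false for general $\Omega$: points may be reachable only through long Whitney chains that never halve in scale step by step.

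Second, the pinch-charging arithmetic goes the wrong way. Using \eqref{e:lcr} to say each pinch at scale $2^{-k}$ carries $s$-content $\gtrsim 2^{-ks}$ is not specific to pinches (every Whitney cube sits next to such a piece of $\Omega^c$), and $\cH^{s}_{\infty}$ is not additive, so a ``packing'' bound on the number $P_k$ of pinches does not follow. Even granting $P_k\lesssim 2^{ks}$, the blocked $t$-content per pinch is $\lesssim 2^{-kt}$, and the sum is $\sum_k P_k 2^{-kt}\lesssim \sum_k 2^{k(s-t)}$, which diverges since $s>t$; your claimed $\sum_k 2^{-k(s-t)}$ has the sign in the exponent reversed.

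The paper proceeds quite differently. The key extra ingredient is a quantitative projection lemma: if $E$ has $\cH^{s}_{\infty}(E)\gtrsim r^{s}$ then for most $(n-1)$-planes $V$ one has $\cH^{t'}_{\infty}(P_V(E))\gtrsim r^{t'}$ for any $t'<s$. At each node $x_\alpha$ with $\delta_\Omega(x_\alpha)=\ve^{|\alpha|}$, apply this with $t'=\tfrac{t+s}{2}$ to $E_\alpha=B(\xi_\alpha,\ve^{|\alpha|}/4)\setminus\Omega$ and a plane $V_\alpha$ nearly orthogonal to $\xi_\alpha-x_\alpha$; a maximal $M\ve^{|\alpha|+1}$-net in $P_{V_\alpha}(E_\alpha)$ then has $\gtrsim \ve^{-t'}>\ve^{-t}$ points. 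Above each net point one slides along $v_\alpha$ until hitting distance $\ve^{|\alpha|+1}$ from $\d\Omega$, producing the children $x_{\alpha i}$. This forces the branching factor to exceed $\ve^{-t}$ at every stage, so a Frostmann measure on the limit set $E$ gives $\cH^{t}_{\infty}(E)\gtrsim 1$ directly; there is no subtraction of blocked content. The domain $\Omega_x$ is then a union of dilated Whitney cubes meeting the half-balls $\tfrac12 B_\alpha$ and the connecting segments $[y_{\alpha i},x_{\alpha i}]$, and the chord-arc properties are checked from the explicit separation estimates $\dist(2B_\alpha,2B_\beta)\geq \tfrac{M}{2}\ve^{|\gamma|+1}$. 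The role of $t<s$ is to create the gap $t'>t$ that makes the branching strictly exceed $\ve^{-t}$, not to produce a convergent loss series.
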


We will define chord-arc domains later (see Definition \ref{d:definition}), but in particular, when bounded, they are John domains.

for $A\subseteq \bR^{n}$, we define its {\it $s$-dimensional Hausdorff content} as
\[
\cH^{s}_{\infty}(A):=\inf\left\{\sum (\diam A_{i})^{s}: A\subseteq \bigcup A_{i}\right\}.
\]

Recently, Koskela, Nandi and Nicolau in \cite{KNN18} showed \eqref{e:visual} holds for simply connected planar domains $\Omega$, when $n=2$ and $t<s=1$ using techniques from complex analysis. 

The conclusion fails for $t=n-1$, even when $\Omega$ has some nice geometry. Indeed, suppose $\Omega\subseteq \bR^{n}$ had uniformly rectifiable boundary and the interior corkscrew condition, then \eqref{e:visual} with $t=n-1$ is exactly the {\it weak local John condition} introduced by Hofmann and Martell. They show that this implies the weak-$A_{\infty}$ property for harmonic measure \cite{HM18}, and in particular, that harmonic measure is absolutely continuous with respect to surface measure, although there are examples of such domains where this isn't the case \cite{BJ90}. 

The theorem also does not hold for $s>n-1$. The counter example is essentially the same example made by Koskela and Lehrb\"{a}ck in \cite[Example 7.3]{KL09}: Let $A$ be (see Figure \ref{f:antenna}) the self-similar fractal in $\bC$ determined the following similarities:
\[f_{1}(z)=\frac{z}{2},\;\; f_{2}(z)=\frac{z+1}{2}, \;\; f_{3}(z)=i\alpha z+\frac{1}{2},f_{4}(z)=-i\alpha z+\frac{1}{2}+i\alpha\]
where $\alpha\in (0,\frac{1}{2})$ is some fixed number (for a reference on self-similar fractals, see \cite[Section 8.3]{Falconer}. Let $\Omega=\bC\backslash A$, then $A$ satisfies $\cH^{s}(B(x,r)\cap \Omega^{c})\sim r^{s}$ for some $s>1$ and all $0<r<1$ yet, by picking $x$ closer and closer to the flatter side of $A$, a John domain with center $x$ intersecting $A$ in a $s$-dimensional portion of the boundary must wrap around to the other side of the antenna, hence the John constant will blow up as $x$ approaches the flat part. 

\begin{figure}[!ht]
\includegraphics[width=300pt]{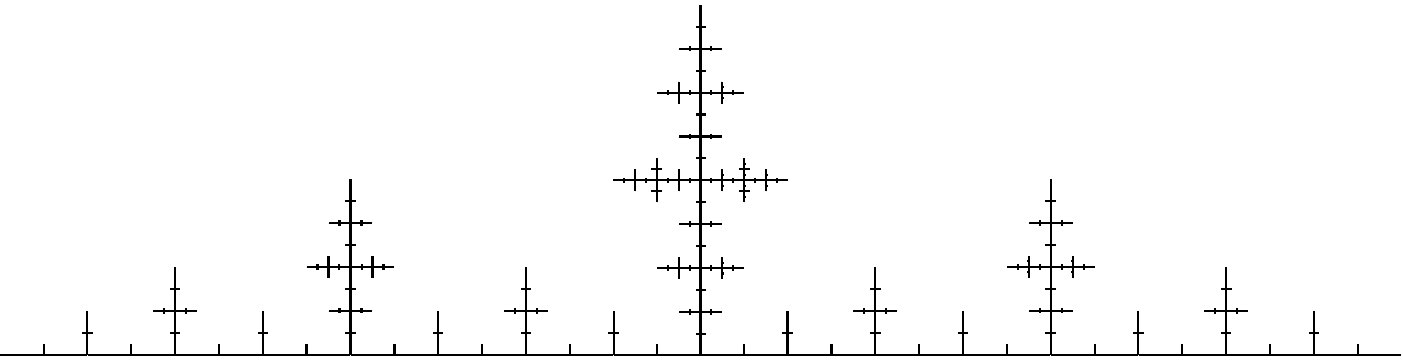}
\caption{The antenna set.}
\label{f:antenna}
\end{figure}

We don't know about the case $s=t<n-1$ and whether it should hold.
%
%
%
%
%
%

The existence of  accessible portions of the boundary has been investigated previously due to its connections to Hardy-type inequalities. 

\begin{definition}
A domain $\Omega$ satisfies the {\it $(p,\beta)$-Hardy inequality} if
\[
\int_{\Omega}|u(x)|^{p}\delta_{\Omega}(x)^{\beta-p}dx \lec \int_{\Omega}|\grad u(x)|^{p}\delta_{\Omega}(x)^{\beta}dx \;\;\;\mbox{ for all } u\in C_{0}^{\infty}(\Omega).
\]
We also say $\Omega$ satisfies a {\it pointwise $(p,\beta)$-Hardy inequality}  if there is $q\in (1,p)$ such that for all $x\in \Omega$ and $u\in C_{0}^{\infty}(\Omega)$,
\[
|u(x)|\lec \delta_{\Omega}(x)^{1-\frac{\beta}{p}} 
\ps{ \sup_{r<2\delta_{\Omega}(x)} \avint_{B(x,r)} |\grad u(y)|^{q}\delta_{\Omega}(y)^{q \frac{\beta}{p}}dy}^{\frac{1}{q}}.
\]
\end{definition}

Koskela and Lehrb\"{a}ck showed the following in \cite[Proposition 5.1]{KL09}:

%

\begin{theorem}
\label{t:KL09}
Let $\Omega\subseteq \R^{n}$ and suppose \eqref{e:visual} holds for some $0\leq t\leq n$. Then $\Omega$ satisfies a pointwise $(p,\beta)$-Hardy inequality for all $\beta<p-n+t$.
\end{theorem}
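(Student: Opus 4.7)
The plan is to combine Frostman's lemma on $v_{x}(c)$ with a dyadic chain-of-balls estimate along each John curve, and then to average the resulting per-curve bound against the Frostman measure. The factor $\mu(v_{x}(c))\sim \delta_{\Omega}(x)^{t}$ recovered from this averaging is precisely what upgrades the single-curve range $\beta<p-n$ (coming from the John curve alone) to the claimed range $\beta<p-n+t$.

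Fix $x\in \Omega$ and $u\in C_{0}^{\infty}(\Omega)$. Since $\cH^{t}_{\infty}(v_{x}(c))\gec \delta_{\Omega}(x)^{t}$, Frostman's lemma produces a positive Borel measure $\mu$ supported on $v_{x}(c)$ with $\mu(v_{x}(c))\gec \delta_{\Omega}(x)^{t}$ and $\mu(B(z,r))\leq r^{t}$ for all $z\in \R^{n},\,r>0$. For each $z\in v_{x}(c)$ choose a $c$-John curve $\gamma_{z}:[0,L_{z}]\to \overline{\Omega}$ with $\gamma_{z}(0)=x,\,\gamma_{z}(L_{z})=z$, arc-length parametrized, and set $y_{k}^{z}:=\gamma_{z}((1-2^{-k})L_{z})$. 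The John condition gives $\delta_{\Omega}(y_{k}^{z})\sim 2^{-k}L_{z}$ and $B_{k}^{z}:=B(y_{k}^{z},\tfrac{1}{2}\delta_{\Omega}(y_{k}^{z}))\subseteq \Omega$. Since $u$ is compactly supported in $\Omega$ we have $u(z)=0$, so telescoping $u(x)-u(z)=\sum_{k}(u(y_{k}^{z})-u(y_{k+1}^{z}))$ and applying a standard Poincar\'{e}--Sobolev--Morrey or Haj\l{}asz--Koskela chain-of-balls estimate on the $B_{k}^{z}$ yields
\[
|u(x)|\lec \sum_{k=0}^{\infty} (2^{-k}L_{z})^{1-\beta/p}\left(\avint_{B_{k}^{z}}|\nabla u|^{q}\delta_{\Omega}^{q\beta/p}\right)^{1/q},
\]
where we absorbed $\delta_{\Omega}\sim 2^{-k}L_{z}$ on $B_{k}^{z}$ into the integrand.

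Integrate against $d\mu(z)$ and apply H\"{o}lder in the $z$-variable: the right-hand side is bounded by
\[
\sum_{k=0}^{\infty} (2^{-k}\delta_{\Omega}(x))^{1-\beta/p}\mu(v_{x}(c))^{1/q'}\left(\int_{v_{x}(c)}\avint_{B_{k}^{z}}|\nabla u|^{q}\delta_{\Omega}^{q\beta/p}\,d\mu(z)\right)^{1/q}.
\]
Interchanging the order of integration and noting that $w\in B_{k}^{z}$ forces $z\in B(w,C\cdot 2^{-k}\delta_{\Omega}(x))$, the Frostman upper bound gives $\mu\{z:w\in B_{k}^{z}\}\leq C(2^{-k}\delta_{\Omega}(x))^{t}$. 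Since every $B_{k}^{z}$ lies in $B(x,C\delta_{\Omega}(x))$, the resulting $w$-integral is controlled by $C\delta_{\Omega}(x)^{n}M^{q}$, where $M:=\bigl(\sup_{r<2\delta_{\Omega}(x)}\avint_{B(x,r)}|\nabla u|^{q}\delta_{\Omega}^{q\beta/p}\bigr)^{1/q}$. Assembling the exponents and using $\mu(v_{x}(c))^{1/q'}\leq C\delta_{\Omega}(x)^{t/q'}$, the $k$-th summand is bounded by $C\delta_{\Omega}(x)^{t+1-\beta/p}M\cdot 2^{-k(1-\beta/p-(n-t)/q)}$.

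The geometric series converges iff $1-\beta/p-(n-t)/q>0$, equivalently $\beta<p-p(n-t)/q$. Letting $q\uparrow p$ (which is permitted in the definition of pointwise Hardy) this range exhausts $\beta<p-n+t$, and after dividing by $\mu(v_{x}(c))\gec \delta_{\Omega}(x)^{t}$ we obtain $|u(x)|\lec \delta_{\Omega}(x)^{1-\beta/p}M$, which is the pointwise $(p,\beta)$-Hardy inequality. The main obstacle is the H\"{o}lder/Frostman interchange in the averaging step: it uses both the lower content bound $\mu(v_{x}(c))\sim \delta_{\Omega}(x)^{t}$ and the upper $t$-regularity $\mu(B(w,r))\leq r^{t}$ in a balanced way, and it is precisely this interaction that produces the gain $+t$ in the admissible range.
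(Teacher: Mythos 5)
The paper does not actually prove this statement---it is quoted from \cite[Proposition 5.1]{KL09}---and your argument is essentially the standard Koskela--Lehrb\"ack proof of that proposition: a Frostman measure on the visual boundary, a chaining estimate along the John curves, and a H\"older/Fubini interchange against the upper Frostman bound, with the exponent bookkeeping (including the choice of $q$ close to $p$ for each fixed $\beta<p-n+t$) carried out correctly. The only points to tidy are that the telescoping should be run on the ball averages $u_{B_{k}^{z}}$ rather than on pointwise values $u(y_{k}^{z})$ (your displayed chain estimate is exactly what that version yields, whereas the pointwise version would need $q>n$), and that the balls $B_{k}^{z}$ lie in $B(x,C\delta_{\Omega}(x))$ with $C$ depending on the John constant $c$ and possibly exceeding the factor $2$ in the paper's definition of the restricted maximal function, so one should invoke the standard fact that the pointwise $(p,\beta)$-Hardy inequality is insensitive to that dilation parameter.
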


From this, they could also show that the $(p,\beta)$-Hardy inequality holds for all $\beta<p-n+t$ as well \cite[Theorem 1.4]{KL09}, however Lehrb\"{a}ck later showed in \cite{Leh14} that \eqref{e:visual} is not necessary to prove this. He also generalizes Theorem \ref{t:KL09} to metric spaces with a suitable substitute for \eqref{e:visual}. 

What is not known is whether having lower content regular complements alone implies pointwise Hardy inequalities without any assumptions on the visual boundary (see the discussion at the top of \cite[p. 1707]{Leh14}). In \cite{Leh14}, Lehrb\"{a}ck shows that they do hold if $\beta\leq 0$  and $\beta<p-n+t$, and so the gap in our knowledge is whether they hold when $0<\beta<p-n+t$. In \cite{Leh09},  however, he shows lower content regularity is necessary:

\begin{theorem}
If $\Omega\subseteq \R^{n}$ admits the pointwise $(p,\beta)$-Hardy inequality, then there is $s > n-p+\beta$ so that $\Omega$ has lower $s$-content regular complement. 
\end{theorem}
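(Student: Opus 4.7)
The plan is to argue via capacity: the pointwise $(p,\beta)$-Hardy inequality should force a uniform lower bound on the weighted $(p,\beta)$-capacity of $\Omega^c$ at every boundary scale; this self-improves to a $(q,\beta)$-capacity density condition for some $q<p$; and the latter forces lower $s$-content regularity with $s=n-q+\beta>n-p+\beta$.

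The first step is to show
\[
\mathrm{cap}_{p,\beta}(\Omega^c \cap B(x_0, r), B(x_0, 2r)) \gec r^{n-p+\beta}
\]
uniformly in $x_0 \in \d\Omega$ and $0<r<\diam \d\Omega$. Given an admissible $\phi\in C_0^\infty(B(x_0, 2r))$ with $\phi\equiv 1$ on a neighborhood of $\Omega^c\cap B(x_0, 2r)$, put $u=(1-\phi)\chi$ where $\chi\in C_0^\infty(B(x_0, 2r))$ is a bump equal to $1$ on $B(x_0, r)$; then $u\in C_0^\infty(\Omega)$. Choose a corkscrew-type point $x^*\in B(x_0, r)\cap\Omega$ with $\delta_\Omega(x^*)\sim r$ (which exists provided $\Omega^c$ does not fill $B(x_0, r)$, a mild nondegeneracy one may assume without loss of generality). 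At $x^*$ one has $u(x^*)=1$, so applying pointwise Hardy at $x^*$ gives
\[
1\lec r^{1-\beta/p}\bigl(\text{local $L^q$ average on $B(x_0, 2r)$ of }|\nabla\phi|^q\delta_\Omega^{q\beta/p}\bigr)^{1/q},
\]
and H\"older's inequality with conjugate exponents $p/q$ and $p/(p-q)$ converts this into $\int|\nabla\phi|^p\delta_\Omega^\beta\gec r^{n-p+\beta}$; taking the infimum over $\phi$ yields the capacity bound.

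Next I would invoke the weighted analog of Lewis's self-improvement theorem (the standard reference being Mikkonen's thesis) to upgrade $(p,\beta)$-capacity density to $(q,\beta)$-capacity density for some $q=q(n,p,\beta,c_0)<p$. Finally, a standard Frostman-type comparison converts a uniform capacity lower bound into a content lower bound:
\[
\mathrm{cap}_{q,\beta}(\Omega^c\cap B(x_0, r), B(x_0, 2r))\gec r^{n-q+\beta}\;\;\Longrightarrow\;\;\cH^{n-q+\beta}_\infty(\Omega^c\cap B(x_0, r))\gec r^{n-q+\beta},
\]
which gives the claim with $s=n-q+\beta>n-p+\beta$.

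The main obstacle is the self-improvement step. The weight $\delta_\Omega^\beta$ is not uniformly Muckenhoupt on arbitrary balls, so one must exploit that on a ball $B(x_0, r)$ centered on $\d\Omega$ the weight is dominated by $r^\beta$ and comparable to $r^\beta$ on a substantial corkscrew region; this is enough to push through a weighted reverse-H\"older inequality for the extremal function of the $p$-capacity, but the adaptation is the most technical part of the argument.
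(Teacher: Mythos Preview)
The paper does not prove this theorem; it is quoted from \cite{Leh09} as background, so there is no in-paper argument to compare against. Evaluating your proposal on its own merits, two issues stand out.

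First, the ``corkscrew-type point'' $x^*\in B(x_0,r)\cap\Omega$ with $\delta_\Omega(x^*)\sim r$ is not a mild nondegeneracy you may assume without loss of generality. Nothing in the hypothesis gives interior corkscrews, and a domain can satisfy the pointwise Hardy inequality while being arbitrarily thin near some boundary points at some scales. You need either to produce such a point from the hypothesis or to avoid needing one---for instance by applying the pointwise inequality at centres of Whitney cubes (which automatically satisfy $\delta_\Omega\sim\ell(Q)$) and summing.

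Second, and more structurally, the self-improvement step is unnecessary, and this is where your argument becomes harder than it should be. Look again at the definition used in the paper: the pointwise $(p,\beta)$-Hardy inequality already contains an exponent $q\in(1,p)$ on the right-hand side. You apply H\"older to pass from this $L^q$ maximal average back up to $L^p$, obtain a $(p,\beta)$-capacity density estimate, and then invoke a weighted Lewis/Mikkonen theorem to recover some $q'<p$. That is circular: the built-in $q<p$ is exactly the open-endedness you are trying to manufacture. Working directly with the $L^q$ estimate (and the weight $\delta_\Omega^{q\beta/p}$) yields a content lower bound in a dimension strictly exceeding $n-p+\beta$ without any appeal to self-improvement of capacity density---which, as you correctly flag, would be delicate for the non-$A_p$ weight $\delta_\Omega^\beta$. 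This direct route is how the result is obtained in \cite{Leh09}.
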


As a corollary of Theorem \ref{t:main} and Theorem \ref{t:KL09},  we get that the lower content regularity is also sufficient, and thus combined with the previous theorem, we get the following characterization.

\begin{corollary}
Let $\Omega\subseteq \R^{n}$, $\beta\in \bR$ and $1<p<\infty$ and $\beta<p-1$. Then $\Omega$ satisfies the $(p,\beta)$-pointwise Hardy inequality if and only if there is $s>n-p+\beta$ for which $\Omega$ has lower $s$-content regular complement.  
\end{corollary}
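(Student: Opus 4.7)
The corollary follows by combining the three results already stated: the Main Theorem, Theorem \ref{t:KL09}, and the unlabeled theorem of Lehrb\"ack just above the corollary. The plan is to prove the two implications separately, with the forward direction being immediate and the reverse direction requiring a brief dimension reduction followed by a chain of applications.

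\textbf{Forward direction.} If $\Omega$ satisfies the pointwise $(p,\beta)$-Hardy inequality, then by Lehrb\"ack's theorem quoted immediately above the corollary, there is $s>n-p+\beta$ such that $\Omega^{c}$ is lower $s$-content regular. No further work is required here.

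\textbf{Reverse direction.} Suppose $\Omega^{c}$ is lower $s$-content regular for some $s>n-p+\beta$. To invoke the Main Theorem we need an exponent in the range $(0,n-1]$. Since $\beta<p-1$ we have $n-p+\beta<n-1$, so it suffices to show that lower $s$-content regularity passes to any smaller exponent (up to a dimensional constant), after which we may replace $s$ by $s':=\min(s,n-1)$ and still have $s'>n-p+\beta$. The reduction is the standard observation that for any bounded set $A$ and any $0<s'\leq s$, any cover $\{A_{i}\}$ of $A$ by sets of diameter at most $\diam A$ satisfies
\[
\sum(\diam A_{i})^{s'}\geq(\diam A)^{s'-s}\sum(\diam A_{i})^{s},
\]
so $\cH^{s'}_{\infty}(A)\geq(\diam A)^{s'-s}\cH^{s}_{\infty}(A)$; applied to $A=B(x,r)\setminus\Omega$ with $\diam A\leq 2r$ this yields
\[
\cH^{s'}_{\infty}(B(x,r)\setminus\Omega)\geq 2^{s'-s}c_{0}\,r^{s'}.
\]

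Now pick any $t$ with $\max(0,n-p+\beta)<t<s'$; this is possible because $s'>n-p+\beta$. The Main Theorem applied with exponents $s'$ and $t$ produces, for every $x\in\Omega$ with $\delta_{\Omega}(x)<\diam\d\Omega$, a chord-arc subdomain $\Omega_{x}\subseteq\Omega$ centered at $x$ whose boundary meets $\d\Omega$ in a set of $\cH^{t}_{\infty}$-content at least $C^{-1}\delta_{\Omega}(x)^{t}$, and in particular \eqref{e:visual} holds with this $t$. (A small point: the Main Theorem's conclusion is stated under the hypothesis $\delta_{\Omega}(x)<\diam\d\Omega$, but when $\diam\d\Omega=\infty$ this is automatic, and when $\delta_{\Omega}(x)\geq\diam\d\Omega$ the pointwise Hardy inequality on that annular region is trivial, so this causes no issue.) By our choice of $t$ we have $\beta<p-n+t$, so Theorem \ref{t:KL09} applies and gives that $\Omega$ satisfies the pointwise $(p,\beta)$-Hardy inequality.

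\textbf{Main obstacle.} There is no substantive obstacle: the essentially trivial dimension-comparison step above is the only place one could be tempted to fuss, and it is purely formal. The depth of the corollary is entirely contained in the Main Theorem and Theorem \ref{t:KL09}.
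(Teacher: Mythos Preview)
Your proof is correct and follows essentially the same route as the paper: the forward direction is exactly Lehrb\"ack's necessity theorem, and the reverse direction chains the Main Theorem with Theorem~\ref{t:KL09}, after first reducing to an exponent $\leq n-1$ via the elementary monotonicity of lower content regularity. The paper argues slightly more tersely (it lets $t\uparrow s$ rather than fixing a single $t$, and handles $s>n-1$ by the same content comparison you wrote out explicitly), but the substance is identical.
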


Indeed, if $\Omega$ has lower $s$-content regular complement, then Theorem \ref{t:main} says \eqref{e:visual} holds for any $t<s$. Theorem \ref{t:KL09} implies it satisfies the pointwise $(p,\beta)$-Hardy inequality for all $\beta<p-n+t$, and hence (letting $t\uparrow s$) for all $\beta<p-n+s$. 

Note that if $\Omega$ is $s$-content regular for some $s>n-1$, then it is also $(n-1)$-content regular, so the above corollary yields the $(p,\beta)$-Hardy inequality for all $\beta<p-1$. 
In Theorem 1.3 in \cite{KL09}, the authors also show that for every $1<s <2$, there is a simply connected domain $\Omega\subseteq \bC$ with lower $s$-content regular complement yet the $(p,p-1)$-Hardy inequality fails. Thus, for lower $(n-1)$-content regular domains, the bound $\beta<p-1$ is tight. 

Condition \eqref{e:visual} implies other Hardy-type inequalities. For example, in \cite{ILTV14}, Ihnatsyeva, Lehrb\"{a}ck, Tuominen, and V\"{a}h\"{a}kangas show that \eqref{e:visual} implies certain fractional Hardy inequalities. \\

%
%
%
%

The structure of the proof of Theorem \ref{t:main} goes roughly as follows. The aim is to construct a tree of tentacles emanating from $x$ whose endpoints are a large subset of the boundary, and then we take an appropriate neighborhood of this tree. Given a point $x\in \Omega$, the boundary has large Hausdorff content near $x$. This means that, for a large set of directions, the orthogonal projection of the boundary has large $t$-content for some $t<s$ of our choosing. We use this to construct a tree of points $\{x_{\alpha}\}$ where $\alpha$ is a multi-index as follows: let $\ve>0$ be small, set  $x_{\emptyset}=x$ and assume without loss of generality that $\delta_{\Omega}(x)=1$. Given a point $x_{\alpha}$ so that $\delta_{\Omega}(x)=\ve^{|\alpha|}$, if $\xi_{\alpha}\in \d\Omega$ is closest to $x_{\alpha}$, find a plane $V_{\alpha}$ passing through $x_{\alpha}$ in which projection of $B(\xi_{\alpha},\ve^{|\alpha|}/4)\cap \d\Omega$ is large in a small ball around $x_{\alpha}$. We can then pick a maximally $M\ve^{|\alpha|+1}$-separated collection of points $\{y_{\alpha i}\}_{i}$ (where $M$ is some large number) in the projection of $B(\xi_{\alpha},\ve^{|\alpha|}/4)\cap \d\Omega$. For each $y_{\alpha i}$ we move up (perpendicular to $P_{\alpha}$) toward the boundary until we find points $x_{\alpha i}$ with distance $\ve^{|\alpha|+1}$ from $\d\Omega$. We then repeat the process on these points, and so on so forth. The union of $\frac{1}{4}B_{\alpha}\cup \bigcup_{i} [y_{\alpha i},x_{\alpha i}]$ over all $\alpha$ will be a connected subset of $\Omega$ and, choosing parameters correctly, will have closure intersecting a part of the boundary with large $t$-content. We fatten this set up by taking the union over dilated Whitney cubes intersecting this set. 
%
Then we show that this resulting domain is in fact chord-arc, the proof of which follows roughly the same procedure that has been done in several papers about harmonic measure, see for example \cite{HM14}. \\

We'd like to thank Riikka Korte, Pekka Koskela and Juha Lehrb\"{a}ck for answering our questions and commenting on the manuscript.

\section{Notation}

We say $a\lec b$ if there is a constant $C$ so that $a\leq Cb$, and $a\lec_{t}b $ if $C$ depends on the parameter $t$. We also write $a\sim b$ if $a\lec b\lec a$ and define $a\sim_{t} b$ similarly. We will omit the dependence on $n$ throughout the paper.

We will let $B(x,r)$ denote the open ball centered at $x$ of radius $r$. If $B=B(x,r)$ and $c>0$, we let $cB=B(x,cr)$. Similarly, if a cube $Q\subseteq \bR^{n}$ with sides parallel to the coordinate axes has center $x$, we denote its side length by $\ell(Q)$ and write $cQ$ for the cube of same center and sides still parallel to the coordinate axes but with side length equal to $c\ell(Q)$.

For $A$ and $B$ sets, and $x\in \bR^{n}$, we define
\[
\dist(x,A)=\inf_{y\in A}|x-y|,\;\; \dist(A,B)=\inf_{x\in B}\dist(x,A)  
\]
and 
\[
\diam A=\sup\{|x-y|:x,y\in A\}.
\]


\section{A Lemma about the Hausdorff content of projections}

We will need to know that if a set has large Hausdorff content, then so does its orthogonal projection in most directions, at least with respect to a smaller dimension. Its proof follows the computations in Chapter 9 of \cite{Mattila}, the only difference being we take more care in order to make them quantitative. We recall that a measure $\mu$ is a $t$-Frostmann measure if 
\[
\mu(B(x,r))\leq r^{t} \mbox{ for all }x\in \R^{n} \mbox{ and }r>0\]
and it is not hard to show that for a $t$-Frostmann measure
\begin{equation}
\label{e:mu<H}
\mu(E)\lec  \cH^{t}_{\infty}(E) \mbox{ for all }E\subseteq \R^{n}.
\end{equation}

Also recall that $G(n,m)$ denotes the {\it Grassmannian} of $m$-dimensional planes in $\R^{n }$ and $\gamma_{n,m}$ is the Grassmannian measure on $G(n,m)$. For a reference, see \cite[Chapter 3]{Mattila}. 

\begin{lemma}
\label{l:mattila}
Let $0<m<n$ be integers, $0<t<s\leq m$ and let $E$ be a compact set. Then for any $V_{0}\in G(n,m)$ and $\delta>0$ there is $V\in G(n,m)$ so that $d(V_{0},V)<\delta$ and, if $P_{V}$ is the orthogonal projection into $V$,
\begin{equation}
\label{e:mattila}
\cH^{t}_{\infty}(P_{V}(E))\gec_{\delta,n,t,s} (\diam E)^{t-s} \cH^{s}_{\infty}(E)
\end{equation}
\end{lemma}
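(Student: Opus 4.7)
My plan is to carry out Mattila's energy-method proof of Marstrand's projection theorem (Chapter 9 of \cite{Mattila}) keeping track of quantitative constants throughout: build a Frostman-type measure on $E$, push it forward to $V$-projections for $V$ near $V_0$, average the $t$-energies over the Grassmannian, and convert the resulting energy bound to a Hausdorff content bound on $P_V(E)$.

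Concretely, I first apply the Hausdorff content version of Frostman's lemma to produce a finite Borel measure $\mu$ supported on $E$ with $\mu(B(x,r)) \leq r^s$ for all $x,r$ and $\mu(E) \gec_{n,s} \cH^s_\infty(E)$. For $V \in G(n,m)$, set $\mu_V = (P_V)_\ast \mu$ and $I_t(\mu_V) = \int\int |P_V(x-y)|^{-t}\,d\mu(x)\,d\mu(y)$. The classical Grassmannian estimate $\int_{G(n,m)} |P_V(z)|^{-t} \,d\gamma_{n,m}(V) \lec_{n,t,m} |z|^{-t}$, valid because $t<m$, together with Fubini give
\[
\int_{B(V_0,\delta)} I_t(\mu_V) \, d\gamma_{n,m}(V) \leq \int_{G(n,m)} I_t(\mu_V) \, d\gamma_{n,m}(V) \lec \int\int |x-y|^{-t} d\mu(x)\, d\mu(y).
\]
The last double integral, by layer cake with $\mu(B(x,r)) \leq \min\{r^s,\mu(E)\}$ split at $r=\diam E$, is $\lec_{s,t} (\diam E)^{s-t}\mu(E)$. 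Averaging over $B(V_0,\delta)$ then produces some $V \in B(V_0,\delta)$ with $I_t(\mu_V) \lec_{\delta,n,s,t} (\diam E)^{s-t}\mu(E)$.

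To convert this to a content bound, let $f(x) = \int |x-y|^{-t}\, d\mu_V(y)$ and put $E' = \{f \leq 4 I_t(\mu_V)/\mu(E)\}$, so $\mu_V(E') \geq \mu(E)/2$ by Chebyshev. For $x \in E'$ the inequality $|x-y|^{-t} \geq r^{-t}$ on $B(x,r)$ yields $\mu_V(B(x,r)) \leq 4 r^t I_t(\mu_V)/\mu(E)$; a one-line doubling move extends this to arbitrary centers at the cost of a factor $2^t$. Thus $\mu_V|_{E'}$ is, up to a constant, a $t$-Frostman measure, and \eqref{e:mu<H} gives
\[
\cH^t_\infty(P_V(E)) \geq \cH^t_\infty(E') \gec \frac{\mu(E)^2}{I_t(\mu_V)} \gec_\delta (\diam E)^{t-s}\mu(E) \gec (\diam E)^{t-s} \cH^s_\infty(E),
\]
which is \eqref{e:mattila}.

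No step is conceptually hard; the main obstacle is disciplined quantitative bookkeeping, particularly tracking the $\delta$-dependence introduced by restricting the Grassmannian average to $B(V_0,\delta)$ and performing the Frostman-type extraction on $P_V(E)$ from just the energy estimate, rather than from positive $\cH^s$-measure of $E$, which we are not assuming.
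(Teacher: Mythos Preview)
Your proposal is correct and follows essentially the same route as the paper: Frostman measure on $E$, the Grassmannian kernel bound $\int |P_V z|^{-t}\,d\gamma_{n,m}\lec |z|^{-t}$, averaging over $B(V_0,\delta)$ to find a good $V$, a Chebyshev/level-set extraction of a subset of $P_V(E)$ on which the pushforward is $t$-Frostman, and a layer-cake bound on $I_t(\mu)$. The only cosmetic differences are that you apply Chebyshev directly where the paper uses a distribution-function argument to pick $\lambda$, and you absorb $\mu(E)\le(\diam E)^s$ into the $I_t(\mu)$ estimate earlier, yielding the slightly cleaner bound $I_t(\mu)\lec_{s,t}(\diam E)^{s-t}\mu(E)$ in one line rather than carrying two terms to the end.
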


\begin{proof}
Let $\mu$ be a $s$-Frostmann measure on $E$ so that $\mu(B(x,r))\leq r^{s}$ for all $x\in \bR^{n}$ and $r>0$ and so that $\mu(E)\sim_{n} \cH^{s}_{\infty}(E)$ (see \cite[Theorem 8.8]{Mattila}). Let $A=\{V:d(V,V_{0})<\delta\}$. By \cite[Corollary 3.12]{Mattila}, 
\[
\int_{G(n,m)} |P_{V}(x)|^{-t}d\gamma_{n,m}(V)\leq \ps{1+\frac{2^{n}t}{\alpha(n)(m-t)}}|x|^{-t} =: \frac{|x|^{-t}}{c},\]

%

Let $F:=P_{V}(E)$. Then

\begin{align*}
I_{t}(\mu)
& :=\int_{E}\int_{E} |x-y|^{-t}d\mu(x)d\mu(y)
\\
& \geq  c \int_{A}\int_{E}\int_{E} |P_{V}(x-y)|^{-t} d\mu(x)d\mu(y)d\gamma_{n,m}(V)\\
& =  c\gamma_{n,m}(A) \avint_{A}\int_{F} \underbrace{\int_{F} |x-y|^{-t}dP_{V}[\mu](x)}_{=:E(y)} dP_{V}[\mu](y) d\gamma_{n,m}(V).
\end{align*}
Hence, there is $V\in A$ so that 
\begin{align*}
C & :=\frac{I_{t}(\mu)}{c\gamma_{n,m}(A)} \geq \int_{F}  E(y) dP_{V}[\mu](y)\\
& =\int_{0}^{\infty} P_{V}[\mu](\{ y\in F:E(y)>\lambda\})d\lambda.
\end{align*}
This implies there must be $\lambda \in [0,2C/P_{V}[\mu](F)]$ so that 
\[
P_{V}[\mu](\{ y\in F:E(y)>\lambda\})\leq P_{V}[\mu](F)/2.\]
 Hence, if $S=\{ y\in F:E(y)\leq \lambda\}$, we have 
 \begin{equation}
 \label{e:Sbig}
 P_{V}[\mu](S)\geq P_{V}[\mu](F)/2.
\end{equation}

Let $\nu = P_{V}[\mu]|_{S}$. Then for $y\in S$ and $r>0$, 
\begin{align*}
\nu(B(y,r))r^{-t} 
& \leq \int_{B(y,r) \cap F}|x-y|^{-t}dP_{V}[\mu](x)
=E(y)\leq \lambda \\
&  \leq \frac{2C}{P_{V}[\mu](F)}.
\end{align*}
Hence, $ \frac{P_{V}[\mu](F)}{2C}\nu$ is a $t$-Frostmann measure on $F$. Thus, since $\nu(F)=P_{V}[\mu](S)\stackrel{\eqref{e:Sbig}}{\geq}P_{V}[\mu](F)/2$,
\begin{equation}
\label{e:CH}
C\cH^{t}_{\infty}(F) \stackrel{\eqref{e:mu<H}}{\gec}   P_{V}[\mu](F) \nu(F)
\stackrel{\eqref{e:Sbig}}{\geq}
\frac{P_{V}[\mu](F)^2}{2} = \frac{\mu(E)^2}{2}\sim_{n} \cH^{s}_{\infty}(E)^2.
\end{equation}

Note that 
\begin{align*}
\int |x-y|^{-t}d\mu(y)
& =\int_{0}^{\infty} \mu(\{y:|x-y|^{-t}>a\})da\\
& =\int_{0}^{\infty} \mu(\{y:|x-y|<a^{-1/t}\})da\\
& = \int_{0}^{\infty} \mu(B(x,a^{-1/t}))da\\
& \leq \int_{0}^{(2\diam E)^{-t}}\mu(B(x,2\diam E))+\int_{(2\diam E)^{-t}}^{\infty}  a^{-s/t}da\\
& \leq \frac{\mu(E)}{(2\diam E)^{t}} -\frac{((2\diam E)^{-t})^{-s/t+1}}{-s/t+1}\\
& \leq \frac{\mu(E)}{(\diam E)^{t}} + \frac{t (2\diam E)^{s-t}}{s-t}\\
& \lec \frac{\cH^{s}_{\infty}(E)}{(\diam E)^{t}} +\frac{t 2^{s-t}}{s-t} (\diam E)^{s-t}.
\end{align*}
Hence, since $\mu(\bR^{n})\sim_{n} \cH^{s}_{\infty}(E)$, we get 
\[
C\sim_{\delta,n} I_{t}(\mu)
\lec\cH^{s}_{\infty}(E)\ps{ \frac{\cH^{s}_{\infty}(E)}{(\diam E)^{t}} +\frac{t 2^{s-t}}{s-t}(\diam E)^{s-t}}\]

Recalling that $F=P_{V})(E)$, we have 
\begin{align*}
\cH^{t}_{\infty}(P_{V}(E)) \stackrel{\eqref{e:CH}}{\gec}_{n} 
& \gec \frac{\cH^{s}_{\infty}(E)^{2}}{C}\gec_{\delta,n} 
\frac{\cH^{s}_{\infty}(E)}{\frac{\cH^{s}_{\infty}(E)}{(\diam E)^{t}} +\frac{t 2^{s-t}}{s-t} (\diam E)^{s-t}}\\
& \geq\frac{\cH^{s}_{\infty}(E)}{(\diam E)^{s-t}(1+\frac{t 2^{s-t}}{s-t})}
\end{align*}

since $\cH^{s}_{\infty}(E)\leq (\diam E)^{s}$. This finishes the proof.

\end{proof}

\section{The proof of Theorem \ref{t:main}}

Instead of constructing curves like that in the definition of a John domain, it will be more convenient to work with Harnack chains. 

Recall that a {\it Harnack chain (of length $k$)} is a sequence of balls $\{B_{i}\}_{i=1}^{k}$ such that for all $i$,
	\begin{enumerate}
		\item $B_{i}\cap B_{i+1}\neq\emptyset$,
		\item $2B_{i}\subseteq \Omega$, and 
		\item $r_{B_{i}} \sim \dist(B_{i},\d\Omega)$.
	\end{enumerate}
	
\begin{definition}
\label{d:definition}
For $C>0$, a domain $\Omega$ is a {\it $C$-uniform domain} if
\begin{enumerate}
\item it has {\it interior corkscrews}, meaning for every $x\in \d\Omega$ and $0<r<\diam \Omega$, there is a ball of radius $r/C$ contained in $B(x,r)\cap \Omega$, and
\item if $\Lambda(t)=1+\log t$, for all $x,y\in \Omega$, there is a Harnack chain from $x$ to $y$ in $\Omega$ of length $C\Lambda (|x-y|/\min\{\delta_{\Omega}(x),\delta_{\Omega}(y)\})$. 
\end{enumerate}

A domain $\Omega$ is a {\it $C$-chord-arc domain} (or {\it CAD}) if it is $C$-uniform and

\begin{enumerate}
\setcounter{enumi}{2}
\item it has {\it exterior corkscrews}: for every $x\in \d\Omega$ and $r>0$, there is a ball of radius $r/C$ contained in $B(x,r)\backslash \Omega$ and 
\item $\d\Omega$ is {\it Ahlfors $(n-1)$-regular}: for every $x\in \d\Omega$ and $0<r<\diam \d\Omega$, 
\[
C^{-1} r^{n-1}\leq \cH^{n-1}(\d\Omega\cap B(x,r)) \leq Cr^{n-1}. 
\]
\end{enumerate}
We'll say $x$ is the {\it center} of $\Omega$ if
\[
B(x,C^{-1}\diam \Omega)\subseteq \Omega \subseteq B(x,\diam\Omega).
\]

\end{definition}

\begin{remark}
Note that this is slightly different from the definition in \cite{HM14}. There they allow {\it any} function $\Lambda:[1,\infty)\rightarrow [1,\infty)$, but one can show that it is always a constant multiple of $1+\log x$, see \cite{GO79}. Also, to some this definition of unifom domain may not be familiar, but it is equivalent to the  common definition that is in terms of curves, see \cite{AHMNT17}. 
\end{remark}

We now begin the proof of Theorem \ref{t:main}. Let $\Omega$ satisfy the conditions of the theorem and let $0<t<s$. Let $x\in \Omega$. For $y\in \Omega$,  set $\delta_{\Omega}(y)=\dist(y,\Omega^{c})$.

Below, $\alpha$ will denote a multi-index $\alpha=\alpha_{1}...\alpha_{|\alpha|}$ where $|\alpha|$ denotes the length of $\alpha$ and each $\alpha_{j}$ is some integer. We say $\alpha\leq \beta$ if $\alpha$ is an ancestor of $\beta$ (that is, the first $|\alpha|$ terms of $\alpha$ and $\beta$ are the same). We let $x=x_{\emptyset}$ where $\emptyset$ is the empty multi-index and suppose $\delta_{\Omega}(x)=1$. Inductively, we construct a tree of points as follows. 

Let $M>0$ be a large constant we will fix later, and $\ve>0$, a constant we will constantly be adjusting to make smaller but ultimately will only depend on $s,t$, and $n$. Let $k\in \{0,1,...\}$ and suppose we have a point $x_{\alpha}$ with $|\alpha|=k$, and that there is $\xi_{\alpha}\in \d\Omega$ so that
\[
|x_{\alpha}-\xi_{\alpha}|=\delta_{\Omega}(x_{\alpha})=\ve^{|\alpha|}.
\]

Let 
\[
B_{\alpha}=B(x_{\alpha},\ve^{|\alpha|}),\;\;\; E_{\alpha} = B(\xi_{\alpha},\ve^{|\alpha|}/4)\backslash \Omega.
\]
Then
\[
\cH_{\infty}^{s} (E_{\alpha}) \geq \underbrace{c_0 4^{-s}}_{:=c_{1}} \delta_{\Omega}(x_{\alpha})^{s}
=c_{1} \ve^{|\alpha|s}.
\]
By our assumptions, and since $\cH^{s}_{\infty}(E_{\alpha})\leq (\diam E_{\alpha})^{s}$,  if $t'=\frac{t+s}{2}$,
\[
(\diam E_{\alpha})^{t'-s} \cH^{s}(E_{\alpha})
\geq \cH^{s}(E_{\alpha})^{\frac{t'}{s}}
\geq \ps{c_{1} \ve^{|\alpha|s}}^{\frac{t'}{s}}
=c_{1}^{\frac{t'}{s}} \ve^{|\alpha|t'}
\]
Let $\theta>0$ be samll. By Lemma \ref{l:mattila}, we can find $v_{\alpha}$ so that 
\begin{equation}
\label{e:1010}
\av{v_{\alpha}-\frac{\xi_{\alpha}-x_{\alpha}}{|\xi_{\alpha}-x_{\alpha}|}}<\theta
\end{equation}
and if $V_{\alpha}$ is the $(n-1)$-dimensional plane passing through $x_{\alpha}$ perpendicular to $v_{\alpha}$ and $P_{\alpha}$ is the orthogonal projection onto $V_{\alpha}$, then for some constant $c_{2}=c_{2}(s,t,n)$,
\[
\cH^{t'}_{\infty}(P_{\alpha}(E_{\alpha}))\geq c_{2} \ve^{|\alpha|t'} = c_{2} \ve^{k|t'} .
\]
Let $M>1$ and $\{y_{\alpha i}\}_{i\in I_{\alpha}'}\subseteq P_{\alpha}(E_{\alpha})$ be a maximal collection of  points so that  $|y_{\alpha i}-y_{\alpha j}|\geq M\ve^{|\alpha|+1}$ for all $i,j\in I_{\alpha}'$. Let $n_{\alpha}'=|I_{\alpha}'|$. Then the balls $B(y_{\alpha i},M\ve^{|\alpha|+1})$ cover $P_{\alpha}(E_{\alpha})$, and so 
\[
(2M\ve^{|\alpha|+1})^{t'} n_{\alpha}'
=\sum_{i\in I_{\alpha}'} (\diam B(y_{\alpha i},M\ve^{|\alpha|+1}))^{t'}
\geq 
\cH^{t'}_{\infty}(P_{\alpha}(E_{\alpha}))\geq c_{2} \ve^{kt'}.
\]
Recalling $k=|\alpha|$, we pick $\ve>0$ small (depending on $M,t,$ and $s$) so that 
\[
n_{\alpha}' \geq ((2M)^{-t'}c_{2})  \ve^{-t'}> \ve^{-t}.
\]
Now pick $I_{\alpha}\subseteq I_{\alpha'}$ so that, if $n_{\alpha}=|I_{\alpha}|$, then there is $n_{k}$ so that 
\begin{equation}
\label{e:nk}
2\ve^{-t}\geq n_{\alpha}=n_{k}>\ve^{-t}.
\end{equation}
Let 
\[
h_{\alpha i} = \sup\{h>0: B(y_{\alpha i + h v_{\alpha} },\ve^{k+1})\subseteq \Omega\}.
\]
That is, $h_{\alpha i}$ is the farthest one can travel from $y_{\alpha i}$ in the direction $v_{\alpha}$ so that one is at least $\ve^{k+1}$ away from the boundary. These values $h_{\alpha i}$ will be the length of the tentacles we add at this stage.

Let $\theta'>0$ be small. Since $E_{\alpha}\subseteq \frac{3}{2} B_{\alpha}$ and by \eqref{e:1010} for $\theta$ small enough (depending on $\theta'$)
\begin{equation}
\label{e:pea}
y_{\alpha}\in P_{\alpha}(E_{\alpha})\subseteq B(x_{\alpha},(1+\theta')\ve^{|\alpha|}/4)\subseteq \frac{2}{5} B_{\alpha}\subseteq  \Omega,
\end{equation}
we know that 
\[
h_{\alpha i}\leq \frac{1}{2}\diam \frac{3}{2} B_{\alpha} = \frac{3}{2} \ve^{|\alpha|}.
\]
Let 
\[
x_{\alpha i} = y_{\alpha i} + h_{\alpha i}v_{\alpha}
\]
so that (see Figure \ref{f:claws})
\[
\delta_{\Omega}(x_{\alpha i})=\ve^{|\alpha|+1}.
\]

\begin{figure}[!ht]
\includegraphics[width=300pt]{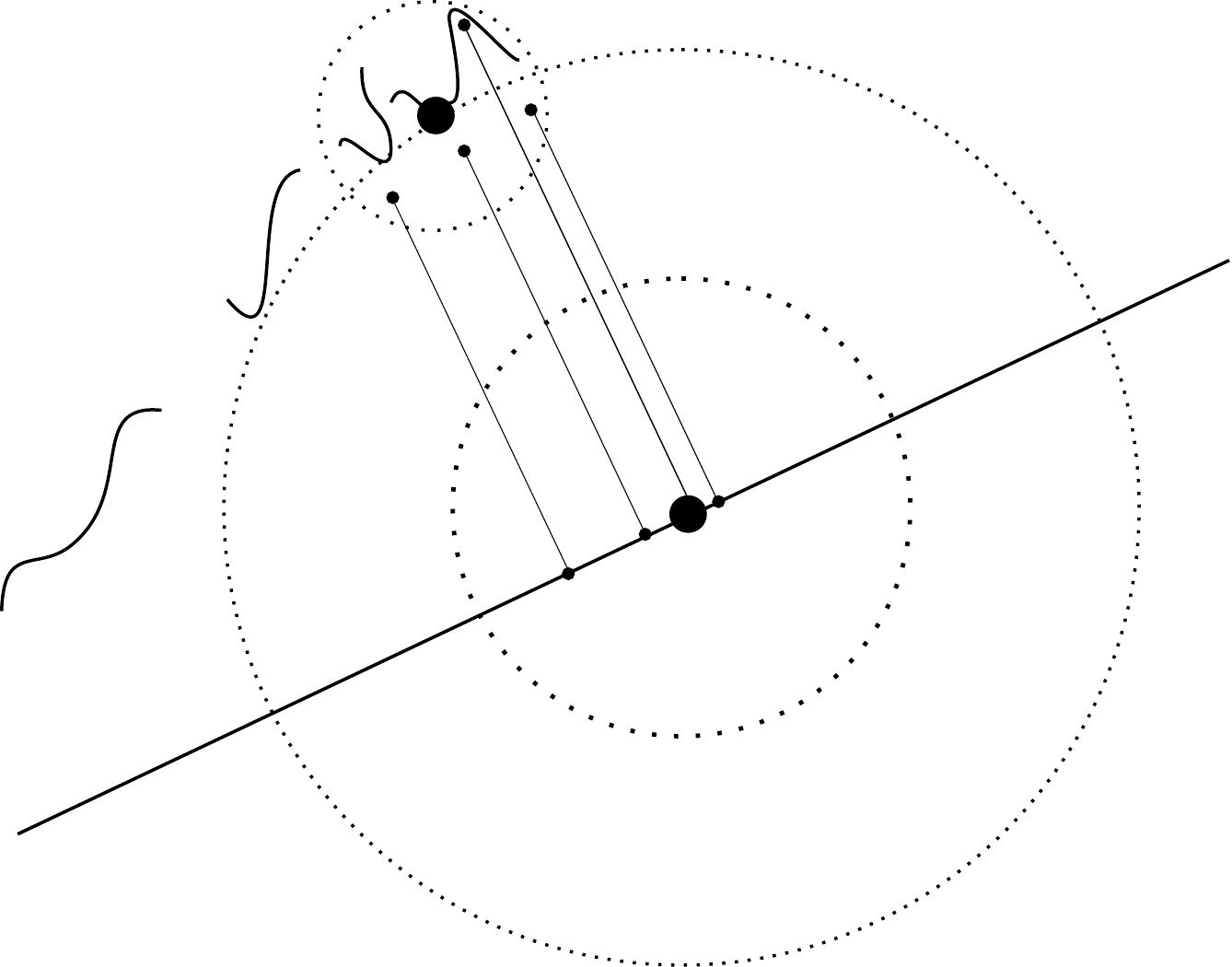}
\begin{picture}(0,0)(300,0)

\put(10,50){$V_{\alpha}$}
\put(160,95){$x_{\alpha}$}
\put(125,80){$y_{\alpha i}$}
\put(45,175){$\Omega^{c}$}
\put(90,220){$\xi_{\alpha}$}
\put(200,155){$\frac{1}{2}B_{\alpha}$}
\put(250,180){$B_{\alpha}$}
\put(75,170){$x_{\alpha i}$}
\put(0,220){$B(\xi_{\alpha},\ve^{|\alpha|}/4)\rightarrow$}
\end{picture}
\caption{Displayed is the point $x_{\alpha}$. In some direction $v_{\alpha}$, the orthogonal projection of $\d\Omega$ has large Hausdorff content, so we can find many points $y_{\alpha i}$ that are in the projection of $\d\Omega$ in $V_{\alpha}\cap \frac{1}{2}B_{\alpha}$ and are $M\ve^{|\alpha|+1}$-separated. We then pick points $x_{\alpha i}$ above these $y_{\alpha i}$ that are distance $\ve^{|\alpha|+1}$ from $\d\Omega$ and so that the segment $[y_{\alpha i},x_{\alpha i}]$ is contained in $\Omega$. These segments are the tentacles that connect $\frac{1}{2}B_{\alpha}$ to the balls $\frac{1}{2}B_{\alpha i}$ that are much closer to the boundary.}
\label{f:claws}
\end{figure}

We record a few useful estimates. First, we claim that 
\begin{equation}
\label{e:xxi}
|x_{\alpha i}-\xi_{\alpha}|<\frac{3\ve^{|\alpha|}}{8}.
\end{equation}

To see this, note that if $\xi_{\alpha i}$ is a point closest to $x_{\alpha i}$, then $|\xi_{\alpha i}-x_{\alpha i}|=\ve^{|\alpha i|}=\ve^{|\alpha|+1}$ by construction, so if $\xi_{\alpha i}\in B(\xi_{\alpha},\ve^{|\alpha|}/4)$, then \eqref{e:xxi} is immediate for $\ve>0$ small enough. It is also immediate if $x_{\alpha i}\in B_{\alpha i}$, so assume  $\xi_{\alpha i},x_{\alpha i}\not \in B(\xi_{\alpha},\ve^{|\alpha|}/4)$. Then $x_{\alpha i}\in B_{\alpha}$ since  $[y_{\alpha i},x_{\alpha i}]\subseteq B(\xi_{\alpha},\ve^{|\alpha|}/4)\cup B_{\alpha}$ and $x_{\alpha i}\not\in B(\xi_{\alpha},\ve^{|\alpha|}/4)$. Since $B_{\alpha}\subseteq \Omega$, $\xi_{\alpha i}\not\in B_{\alpha}$. Let $y\in [x_{\alpha i},\xi_{\alpha i}]\cap \d B_{\alpha}$ and $u_{\alpha} =(\xi_{\alpha}-x_{\alpha})/|\xi_{\alpha}-x_{\alpha}|$. Then 
\begin{align*}
|\xi_{\alpha}-y|^2
& =|\xi_{\alpha}-x_{\alpha}|^2+|x_\alpha-y|^2-2(\xi_{\alpha}-x_{\alpha})\cdot(y-x_{\alpha})\\
& =2\ve^{2|\alpha|}\ps{1- \av{u_{\alpha}\cdot \frac{y-x_{\alpha}}{\ve^{\alpha}}}}\\
& \stackrel{\eqref{e:1010}}{ \leq} 2\ve^{2|\alpha|}\theta + 2\ve^{2|\alpha|}\ps{1- \av{v_{\alpha}\cdot \frac{y-x_{\alpha}}{\ve^{\alpha}}}}\\
&   \leq 2\ve^{2|\alpha|}\theta +2\ve^{2|\alpha|}\ps{1- \av{v_{\alpha}\cdot \frac{y-x_{\alpha}}{\ve^{\alpha}}}^2}\\
& =2\ve^{2|\alpha|}\theta +2\ve^{2|\alpha|} \av{P_{\alpha}\ps{ \frac{y-x_{\alpha}}{\ve^{\alpha}}}}^2
=2\ve^{2|\alpha|}\theta +2\av{P_{\alpha}\ps{ {y-x_{\alpha}}}}^2
\end{align*}
Recalling that $P_{\alpha}(x_{\alpha i})=y_{\alpha i}\in \frac{1}{4}B_{\alpha}$, for $\ve,\theta,\theta'>0$ small enough, and since $\sqrt{2}/4<3/8$, we have
\begin{align*}
|\xi_{\alpha}-x_{\alpha i}|
& \leq \ve^{|\alpha|+1}+|\xi_{\alpha}-y|
\leq \ve^{|\alpha|+1}+\sqrt{2}\ve^{|\alpha|}\sqrt{\theta} +  \sqrt{2} |P_{\alpha}(y-x_{\alpha})|\\
& \leq \ve^{|\alpha|+1}+\sqrt{2}\ve^{|\alpha|}\sqrt{\theta} +  \sqrt{2} \ps{|P_{\alpha}(x_{\alpha i}-x_{\alpha})|+\ve^{|\alpha|+1}}\\
&  \stackrel{\eqref{e:pea}}{ \leq}  \ve^{|\alpha|+1}+\sqrt{2}\ve^{|\alpha|}\sqrt{\theta} + \sqrt{2}\ps{(1+\theta')\frac{\ve^{|\alpha|}}{4}+\ve^{|\alpha|+1}}
<\frac{3\ve^{|\alpha|}}{8}
\end{align*}
This proves \eqref{e:xxi}.


%
Thus, for $\ve>0$ small,
\begin{equation}
\label{e:contained}
2B_{\alpha i}
\subseteq B\ps{\xi_{\alpha},\frac{3\ve^{|\alpha|}}{8}+2\ve^{|\alpha|+1}}
\subseteq B\ps{x_{\alpha},\frac{11\ve^{|\alpha|}}{8}+2\ve^{|\alpha|+1}}
\subseteq \frac{4}{3} B_{\alpha}
\end{equation}
where $B_{\alpha i} = B(x_{\alpha i},\ve^{|\alpha i|})$.
Moreover, for $i,j\in I_{\alpha}$ distinct and $M>8$,
\begin{align*}
\dist(2B_{\alpha i},2B_{\alpha j})
& \geq \dist(P_{\alpha}(2B_{\alpha i}),P_{\alpha}(2B_{\alpha i}))\\
& \geq \dist(B(y_{\alpha i},2\ve^{k+1}),B(y_{\alpha j},2\ve^{k+1})) \\
& \geq (M-4)\ve^{k+1} \geq \frac{M}{2} \ve^{k+1}.
\end{align*}
By \eqref{e:contained}, this implies that for any $\alpha$ and $\beta$ of possibly different lengths, if $\gamma$ is the earliest common ancestor of $\alpha$ and $\beta$ and $\gamma\neq\alpha,\beta$, then
\begin{equation}
\label{e:far}
\dist(2B_{\alpha},2B_{\beta})
\geq \frac{M}{2} \ve^{|\gamma|+1}.
\end{equation}
In particular, 
\begin{equation}
\label{e:far-same-gen}
\dist(2B_{\alpha},2B_{\beta})\geq \frac{M}{2}\ve^{k}\;\; \mbox{ if }\;\; |\alpha|=|\beta|=k \mbox{ and }\alpha\neq\beta.
\end{equation}

We will also need the following estimate bounding how close a ball is from the center of its parent ball: for $\ve>0$ small,
\begin{align}
\dist\ps{\frac{1}{2} B_{\alpha},2B_{\alpha i}}
& \geq |x_{\alpha} - x_{\alpha i}| - \frac{\ve^{|\alpha|}}{2} -2\ve^{|\alpha|+1}\notag \\
& \stackrel{\eqref{e:xxi}}{\geq}  |x_{\alpha }-\xi_{\alpha }| - \frac{3\ve^{|\alpha|}}{8}- \frac{\ve^{|\alpha|}}{2}(1+4\ve) \notag \\
& =\ve^{|\alpha|} - \frac{3\ve^{|\alpha|}}{8}- \frac{\ve^{|\alpha|}}{2}(1+4\ve)
>\frac{\ve^{|\alpha|}}{9}.
\label{e:spaced}
\end{align}

\begin{lemma}
\label{l:frostmann}
Let $E\subseteq \d\Omega$ be the set of points $z$ for which there is a sequence of multi-indices $\alpha_{k}$ with $|\alpha_{k}|=k$ and $x_{\alpha_{k}}\rightarrow z$. Then
\[
\cH^{t}_{\infty}(E)\gec 1. 
\]
\end{lemma}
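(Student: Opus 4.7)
The natural approach is to build a $t$-Frostman-type measure on $E$ and apply the mass distribution principle \eqref{e:mu<H}. Let $\mathcal{T}$ be the space of infinite branches $\gamma=(\gamma_k)_{k\geq 0}$ of the tree, i.e.\ sequences of multi-indices with $|\gamma_k|=k$ and $\gamma_{k+1}$ a child of $\gamma_k$. Iterating \eqref{e:contained} gives $x_{\gamma_{k+1}}\in \frac{4}{3}B_{\gamma_k}$, so $|x_{\gamma_{k+1}}-x_{\gamma_k}|\leq \frac{4}{3}\varepsilon^k$, making $(x_{\gamma_k})_k$ Cauchy; its limit $z_\gamma$ exists and satisfies $z_\gamma\in\partial\Omega$ since $\delta_\Omega(x_{\gamma_k})=\varepsilon^k\to 0$, so $z_\gamma\in E$. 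Moreover, summing the geometric series gives $|z_\gamma-x_\alpha|\leq \frac{4/3}{1-\varepsilon}\varepsilon^{|\alpha|}\leq 2\varepsilon^{|\alpha|}$ whenever $\alpha$ is an ancestor of $\gamma$, so $z_\gamma\in\overline{2B_\alpha}$.

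Because $n_\alpha=n_{|\alpha|}$ depends only on the level (by \eqref{e:nk}), the standard cylinder construction defines a probability measure $\tilde\mu$ on $\mathcal{T}$ assigning mass $1/N_k$ to each cylinder $\{\gamma:\gamma_k=\alpha\}$, where $N_k:=\prod_{j=0}^{k-1}n_j$. Let $\mu:=z_*\tilde\mu$ be its pushforward to $E$, which is then a probability measure with $\mu(E)=1$. It remains to verify $\mu(B(x,r))\lesssim_{s,t,n,c_0} r^t$ uniformly in $x,r$; the lemma then follows from \eqref{e:mu<H}. For $r>1$ the bound is trivial since $\mu$ is a probability measure and $r^t\geq 1$. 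For $0<r\leq 1$, let $k\geq 0$ be the unique integer with $\varepsilon^{k+1}<r\leq \varepsilon^k$. By the preceding paragraph, the mass $1/N_k$ carried by each level-$k$ cylinder sits inside $\overline{2B_\alpha}$, so
\[
\mu(B(x,r))\leq \frac{\#\{\alpha:|\alpha|=k,\; \overline{2B_\alpha}\cap B(x,r)\neq\emptyset\}}{N_k}.
\]
The separation estimate \eqref{e:far-same-gen} forces the centers $x_\alpha$ appearing above to lie in $B(x,3\varepsilon^k)$ and to be pairwise $\frac{M}{2}\varepsilon^k$-separated, so a volume/packing comparison bounds the numerator by $(1+12/M)^n\leq C(n)$ once $M$ is chosen large (say $M=20$, which is compatible with the earlier requirement $M>8$). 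Using $n_j>\varepsilon^{-t}$ from \eqref{e:nk} gives $N_k>\varepsilon^{-tk}$, hence
\[
\mu(B(x,r))\leq C(n)\,\varepsilon^{tk} = C(n)\,\varepsilon^{-t}\,\varepsilon^{t(k+1)}< C(n)\,\varepsilon^{-t}\,r^t.
\]

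The plan is essentially bookkeeping once the key ingredients \eqref{e:nk}, \eqref{e:contained}, and \eqref{e:far-same-gen} are in hand; the only place care is needed is in fixing $M$ (large, depending only on $n$) and $\varepsilon$ (small, depending on $M,s,t$, as already arranged) so that both the packing constant and the Frostman constant $C(n)\varepsilon^{-t}$ absorb into the implicit constant, yielding $\mathcal{H}^t_\infty(E)\geq \mu(E)/(C(n)\varepsilon^{-t})\gtrsim_{s,t,n,c_0} 1$.
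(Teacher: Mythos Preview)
Your proof is correct and follows essentially the same strategy as the paper: build a Frostman measure on $E$ via the tree structure, then use the separation estimate \eqref{e:far-same-gen} together with \eqref{e:nk} to verify the Frostman bound and invoke \eqref{e:mu<H}. The only notable difference is bookkeeping: the paper constructs $\mu$ as a weak limit of measures supported on the balls $2B_\alpha$ (rather than as a pushforward of the cylinder measure, which is equivalent), and it chooses the level $k$ so that $\tfrac{M}{4}\varepsilon^{k}>\diam B\geq \tfrac{M}{4}\varepsilon^{k+1}$, which forces \emph{at most one} $2B_\alpha$ with $|\alpha|=k$ to meet $B$ and so avoids your packing argument entirely.
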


\begin{proof}
Let us define a sequence of probability measures $\mu_{k}$ as follows. We first let $\mu_{0}$ be a measure so that $\mu_{0}(2B_{\emptyset})=1$. Inductively, and using \eqref{e:contained} we let $\mu_{k}$ be a measure so that 
\[
\mu_{k}(2B_{\alpha i}) = \frac{\mu_{k-1}(2B_{\alpha})}{n_{\alpha}}<\mu_{k-1}(2B_{\alpha})\ve^{t} \;\; \mbox{ for all }i\in I_{\alpha}.
\]
By passing to a weak limit, we obtain a measure $\mu$ supported on $E$ so that if $\alpha'$ denotes the string $\alpha$ minus its last term, then
\[
\mu(2B_{\alpha}) = \frac{\mu(2B_{\alpha'})}{n_{\alpha'}}\stackrel{\eqref{e:nk}}{<}\mu(2B_{\alpha'})\ve^{t}<\cdots < \ve^{|\alpha|t} \;\; \mbox{ for all }i\in I_{\alpha}.
\]
In particular, if $B$ is any ball intersecting $E$ with $\diam B<1$, let $k$ be such that $\frac{M}{4}\ve^{k}>\diam B\geq \frac{M}{4}\ve^{k+1}$. Then there is at most one $2B_{\alpha}$ with $|\alpha|=k$ intersecting $B$; otherwise, if $\beta$ was another such multi-index, then
\[
\frac{M}{2} \ve^{k}
\stackrel{\eqref{e:far-same-gen}}{\leq} \dist(2B_{\alpha},2B_{\beta})
\leq \diam B<\frac{M}{4} \ve^{k}
\]
which is a contradiction. Thus,
\[
\mu(B)\leq \mu(2B_{\alpha})<\ve^{tk}\lec (\diam B)^{t}.
\]
If $\diam B\geq 1$, then 
\[
\mu(B)\leq \mu(\bR^{n})=1\leq (\diam B)^{t}.
\]
Thus, $\mu$ is a $t$-Frostmann measure, so $\cH^{t}_{\infty}(E)\gec_{n} \mu(E)=1$. 
\end{proof}

Fix an integer $N$ and let $W$ denote the Whitney cubes for $\Omega$, which we define to be the set of maximal dyadic cubes $Q$ so that 
\[
NQ\subseteq \Omega. 
\]
Let $\lambda>1$. For $\alpha$ a multi-index, let
\[
\cC(\alpha) = \left\{Q\in W:\;\; Q\cap \ps{\frac{1}{2} B_{\alpha} \cup\bigcup_{i\in I_{\alpha}}[y_{\alpha i},x_{\alpha i}]}\neq\emptyset \right\}\]
where $[x,y]$ denotes the closed line segment between $x$ and $y$, and 
\[
\Omega_{\alpha}=\bigcup_{Q\in \cC_{\alpha}} \lambda Q. 
\]
We now pick $N$ large enough so that by \eqref{e:contained},
\begin{equation}
\label{e:ominb}
\Omega_{\alpha}\subseteq \frac{5}{4}B_{\alpha}
\end{equation}
and so that
\begin{equation}
\label{e:Q1/2B}
\lambda Q\subseteq \frac{3}{4} B_{\alpha}\mbox{ for all $Q\in W$ so that $\lambda Q\cap \frac{1}{2}B_{\alpha}\neq\emptyset$}.
\end{equation}

Note that all the Whitney cubes $Q\in \cC_{\alpha}$, have comparable sizes (depending on $\ve$), there are boundedly many such cubes. Since $\frac{1}{2} B_{\alpha} \cup\bigcup_{i\in I_{\alpha}}[y_{\alpha i},x_{\alpha i}]$ is connected, so is $\Omega_{\alpha}$. Because of this, it is not hard to show that, for $\lambda$ close enough to $1$, $\Omega_{\alpha}$ is a CAD with constants depending only on $\ve$, $\lambda$, and $n$. Here, $\lambda$ is a universal constant depending on $n$ and is now fixed.

Also, if 
\[
\Omega(\alpha)=\bigcup_{\beta\geq \alpha} \Omega_{\beta},
\]
then
\begin{equation}
\label{e:o,imb2}
\Omega(\alpha)\subseteq 2B_{\alpha}. 
\end{equation}

Let $\cC=\bigcup_{\alpha} \cC(\alpha)$ and 
\[
\Omega(x) = \bigcup_{\alpha}\Omega_{\alpha} = \bigcup_{Q\in \cC} \lambda Q.
\]

Note that by construction. $E\subseteq \d\Omega(x)\cap \d\Omega$.

\begin{lemma}
The domain $\Omega(x)\subseteq \Omega$ is $C$-uniform with $C$ depending on $\ve$ and $n$ so that $\d\Omega\cap \d\Omega(x)=E$.
\end{lemma}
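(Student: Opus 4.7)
The plan is to establish the boundary identity $\d\Omega\cap\d\Omega(x)=E$ and the two defining conditions of a $C$-uniform domain separately. For the identity, $E\subseteq\d\Omega\cap\d\Omega(x)$ was recorded just before the lemma; for the reverse inclusion, fix $w\in\d\Omega\cap\d\Omega(x)$ and approximate it by points $w_j\in\Omega(x)$, each in some $\lambda Q_j$ with $Q_j\in\cC(\beta_j)$. The Whitney condition $\dist(Q_j,\d\Omega)\sim\ell(Q_j)$ combined with $w\in\d\Omega$ forces $\ell(Q_j)\to 0$, and hence $|\beta_j|\to\infty$. For each $k$, let $\alpha_k^j$ be the length-$k$ ancestor of $\beta_j$; by \eqref{e:far-same-gen} and \eqref{e:ominb} the sets $\{\Omega_\alpha:|\alpha|=k\}$ lie in pairwise disjoint balls $\tfrac{5}{4}B_\alpha$ separated by $\gec\ve^k$, so for $j$ large $\alpha_k^j$ stabilizes to a single index $\alpha_k$ with $|\alpha_k|=k$. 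Then \eqref{e:xxi} yields $|x_{\alpha_{k+1}}-x_{\alpha_k}|\lec\ve^k$, so $\{x_{\alpha_k}\}$ is Cauchy with limit within $O(\ve^k)$ of $w$ for each $k$, hence equal to $w$, placing $w\in E$.

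For the $C$-uniform property, we verify both conditions using the tree structure. The interior corkscrew condition is direct: given $y\in\d\Omega(x)$ and $0<r<\diam\Omega(x)$, choose $k$ with $\ve^k\sim r$ and locate $\alpha$ with $|\alpha|=k$ so that $\tfrac{5}{4}B_\alpha$ lies within distance $\lec r$ of $y$ (by walking up the tree from a Whitney cube approximating $y$); the ball $\tfrac{1}{4}B_\alpha\subseteq\Omega(x)$ from \eqref{e:Q1/2B}, of radius $\sim r$ and within $O(r)$ of $y$ by \eqref{e:contained}, then serves. For the Harnack chain condition, given $y,z\in\Omega(x)$, we identify Whitney cubes $Q\ni y$, $Q'\ni z$, pick $\alpha,\beta$ with $Q\in\cC(\alpha)$, $Q'\in\cC(\beta)$ maximizing $|\alpha|,|\beta|$, and let $\gamma$ be their least common ancestor. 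The chain is built in three stages: (i) inside $\Omega_\alpha$, which is a CAD with constants depending only on $\ve,\lambda,n$, connect $y$ to $x_\alpha$; (ii) walk up $\Omega_\alpha\to\Omega_{\alpha'}\to\cdots\to\Omega_\gamma\to\cdots\to\Omega_\beta$, using that consecutive $\Omega_\mu,\Omega_{\mu i}$ share the Whitney cube containing $x_{\mu i}$ (this cube meets both $[y_{\mu i},x_{\mu i}]$, placing it in $\cC(\mu)$, and $\tfrac{1}{2}B_{\mu i}$, placing it in $\cC(\mu i)$); (iii) symmetrically connect $x_\beta$ to $z$ inside $\Omega_\beta$. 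The total chain length is $\lec_\ve(|\alpha|-|\gamma|)+(|\beta|-|\gamma|)$.

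The main obstacle is matching this chain length to the logarithmic bound in Definition \ref{d:definition}(2). From \eqref{e:o,imb2}, $|y-z|\lec\ve^{|\gamma|}$; from the maximality of $|\alpha|,|\beta|$ together with the Whitney-cube size estimate, $\delta_{\Omega(x)}(y)\lec\ve^{|\alpha|}$ and similarly $\delta_{\Omega(x)}(z)\lec\ve^{|\beta|}$. The complementary lower bound $|y-z|\gec_\ve\ve^{|\gamma|}$ is the delicate point: when $\alpha,\beta$ lie in distinct children of $\gamma$ it follows from \eqref{e:far}; when $\alpha=\gamma$ (resp.\ $\beta=\gamma$) the maximality of $|\alpha|$ places $y$ outside every $\Omega_{\gamma i}$, and \eqref{e:spaced} yields the same lower bound. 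Combining these, $\Lambda(|y-z|/\min\{\delta_{\Omega(x)}(y),\delta_{\Omega(x)}(z)\})\gec_\ve\max(|\alpha|,|\beta|)-|\gamma|$, dominating the chain length as required.
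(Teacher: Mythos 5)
Your proof follows the same route as the paper's: reduce the Harnack chain condition to the tree, connect along ancestors by chains of length $\sim k_{\alpha}+k_{\beta}$ using that consecutive pieces $\Omega_{\mu}$, $\Omega_{\mu i}$ overlap, and compare to the logarithm via the separation estimates. You are in fact more self-contained than the paper, which only connects the centers $x_{\alpha},x_{\beta}$ and defers the reduction from general points to \cite[Appendix A.1]{HM14}, declares the interior corkscrew to hold ``by construction,'' and asserts the boundary identity follows from the preceding discussion (which only gives $E\subseteq \d\Omega\cap\d\Omega(x)$); your stabilizing-ancestor argument for the reverse inclusion is correct and worth having.

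One justification needs patching: in the case $\alpha=\gamma$ you derive $|y-z|\gec_{\ve}\ve^{|\gamma|}$ from \eqref{e:spaced} on the grounds that maximality of $|\alpha|$ puts $y$ outside every $\Omega_{\gamma i}$. But \eqref{e:spaced} only separates $\tfrac12 B_{\gamma}$ from $2B_{\gamma i}$, whereas $y$ may lie in a Whitney cube meeting a tentacle $[y_{\gamma i},x_{\gamma i}]$ near its endpoint $x_{\gamma i}$, which is the \emph{center} of $B_{\gamma i}$; such a cube need not meet $\tfrac12 B_{\gamma i}$, so it can belong to $\cC(\gamma)$ only while $y$ sits well inside $2B_{\gamma i}$, arbitrarily close to $\Omega_{\gamma i}$. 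Being outside a set is not being far from it. The conclusion survives by a different (easy) argument: if $|\beta|\leq|\gamma|+1$ the chain has bounded length and no lower bound on $|y-z|$ is needed; if $|\beta|\geq|\gamma|+2$, then every point of $\Omega_{\gamma}$ satisfies $\delta_{\Omega}(y)\gec\ve^{|\gamma|+1}$ (the set $\tfrac12 B_{\gamma}\cup\bigcup_i[y_{\gamma i},x_{\gamma i}]$ stays at distance $\geq\ve^{|\gamma|+1}$ from $\d\Omega$, and passing to the dilated Whitney cubes meeting it only changes this by absolute constants), while $\delta_{\Omega}(z)\lec\ve^{|\beta|}\leq\ve\cdot\ve^{|\gamma|+1}$ since $Q'\in\cC(\beta)$; hence $|y-z|\geq\delta_{\Omega}(y)-\delta_{\Omega}(z)\gec\ve^{|\gamma|+1}$ for $\ve$ small, and the lost factor of $\ve$ is absorbed into the $\lec_{\ve}$ in the final comparison.
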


\begin{proof}
The last part of the lemma follows from the discussion that preceded it, so we just need to verify that $\Omega(x)$ is uniform. By construction, $\Omega(x)$ satisfies the interior corkscrew property, and so we just need to bound the length of Harnack chains. As in the proofs of  \cite[A.1]{HM14}, since the $\Omega_{\alpha}$ are themselves uniform, it suffices to show that we may connect each $x_{\alpha}$ and $x_{\beta}$ by Harnack chains of the correct length.

Let $\gamma$ be the earliest common ancestor of $\alpha$ and $\beta$. Let $k_{\alpha}=|\alpha|-|\gamma|$, $k_{\beta}=|\beta|-|\gamma|$, and let $\alpha_{j}$ be the ancestor of $\alpha$ so that $|\alpha_{j}|=|\gamma|+j$. Note that since $\delta_{\Omega}(x_{\alpha_{j}})=\ve^{|\gamma|+j} = \ve \delta_{\Omega}(x_{\alpha_{j+1}})$ and by construction of $\Omega_{\alpha}$, we know $\delta_{\Omega_{\alpha_{j}}}(x_{\alpha_{j}})\sim \delta_{\Omega_{\alpha_{j}}}(x_{\alpha_{j+1}})\sim \ve^{|\gamma|+j} $, and since both $x_{\alpha_{j}}$ and $x_{\alpha_{j+1}}$ are contained in  $2B_{\alpha_{j}}$, we know $|x_{\alpha_{j}}-x_{\alpha_{j+1}}|\leq  2\ve^{|\gamma|+j}$. Thus, since  $\Omega_{\alpha_j}$ is a CAD, 
there is a Harnack chain in $\Omega_{\alpha_{j}}$of uniformly bounded length (depending on the uniformity constants for $\Omega_{\alpha_{j}}$) from $x_{\alpha_{j}}$ to $x_{\alpha_{j+1}}$. The union of the Harnack chains for each $j$ gives a Harnack chain from $x_{\gamma}$ to $x_{\alpha}$ of length comparable to $k_{\alpha}$. We can find another Harnack chain from $x_{\gamma}$ to $x_{\beta}$ of length $k_{\beta}$. Now we just need to estimate the length of the total chain. By \eqref{e:ominb} and \eqref{e:far}, 
\[
|x_{\alpha}-x_{\beta}|
\geq \frac{M}{2} \ve^{|\gamma|+1}.\]
Also, by definition of $\Omega_{\alpha}$ and $\Omega(x)$, we have 
\[
\delta_{\Omega(x)}(x_{\alpha})\sim \delta_{\Omega}(x_{\alpha})=\ve^{|\alpha|}.
\]
Thus, the length of the Harnack chain is at most a constant times
\begin{align*}
k_{\alpha}+k_{\beta}
& \leq 2\max\{k_{\alpha},k_{\beta}\}
\lec_{\ve} 1+ \log \frac{\ve^{|\gamma|}}{\min \{\ve^{|\alpha|},\ve^{|\beta|}\}}\\
& \lec 1+ \log \frac{|x_{\alpha}-x_{\beta}|}{\min\{\delta_{\Omega(x)}(x_{\alpha}),\delta_{\Omega(x)}(x_{\beta})\}}.
\end{align*}

Thus, the conditions for being uniform hold.
\end{proof}

\begin{lemma}
$\Omega(x)$ has exterior corkscrews. 
\end{lemma}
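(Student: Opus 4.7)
The plan is to exhibit, for each $y \in \d\Omega(x)$ and each $0 < r < \diam \Omega(x)$, a ball $B(z, r/C) \subseteq B(y,r) \setminus \Omega(x)$ with $C$ depending only on $\ve$, $M$, $\lambda$, $N$, and $n$. The overall approach is to localize $y$ at the generation of the tree matching the scale $r$, apply the exterior corkscrew property of the single CAD $\Omega_\alpha$ at that generation, and then use the separation estimates \eqref{e:far}, \eqref{e:far-same-gen}, and \eqref{e:spaced} to check that the resulting ball is not contaminated by a neighboring $\Omega_\beta$.

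More concretely, I would fix $k$ with $\ve^{k+1} < r \leq \ve^k$ and look for the (necessarily unique, by \eqref{e:far-same-gen}) multi-index $\alpha$ of length $k$ such that $y \in 2B_\alpha$. If no such $\alpha$ exists, then \eqref{e:o,imb2} applied at generation $k$ shows $B(y, r/10)$ meets no $\Omega_\beta$ with $|\beta| \geq k$, while by \eqref{e:far} only boundedly many ancestors $\Omega_{\alpha'}$ with $|\alpha'| < k$ can lie near $y$; excising these from $B(y, r/10)$ produces the desired corkscrew ball. If $\alpha$ does exist, then $y \in \d\Omega_\alpha$ (since $y \in \d\Omega(x)$ rules out $y$ being in the interior of any $\Omega_\beta$), and the CAD structure established in the previous lemma for $\Omega_\alpha$ yields a ball $B(z, c' r) \subseteq B(y, r) \setminus \overline{\Omega_\alpha}$ for some $c' = c'(\ve, \lambda, N, n) > 0$. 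I would then verify that $B(z, c'r)$ avoids every other $\Omega_\beta$: cousin subdomains are kept at distance $\geq \frac{M}{2}\ve^{|\gamma|+1}$ by \eqref{e:far}, which exceeds $r$ once $M$ is large compared to $1/c'$; descendants of $\alpha$ are confined to $\bigcup_i 2B_{\alpha i}$ by \eqref{e:o,imb2}, and those child balls are separated from the bulk of $\tfrac{1}{2} B_\alpha$ by \eqref{e:spaced}; finally, the only ancestor intruding into $\tfrac{5}{4} B_\alpha$ is the parent, whose contribution lies in a thin tube of width $O(\ve^k / N)$, avoidable for $N$ large enough.

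The main obstacle is the case where $y$ lies near one of the junction points $x_{\alpha i}$, at which both $\overline{\Omega_\alpha}$ and $\overline{\Omega_{\alpha i}}$ meet; here the corkscrew ball extracted from $\Omega_\alpha$'s CAD structure could be swallowed by the child $\Omega_{\alpha i}$ (or vice versa). The resolution is to always work at the generation matching the scale: if $y$ sits within $O(\ve^{k+1})$ of some $x_{\alpha i}$, I would instead apply the argument to $\Omega_{\alpha i}$ at generation $k+1$, where $y$ now sits away from the nearest junction at the relevant scale. Consistency of this case analysis forces $M$ and $N$ to be chosen large compared to the uniform CAD constants of the $\Omega_\alpha$, but since those constants depend only on $\ve$, $\lambda$, and $n$, this is compatible with the final quantitative conclusion.
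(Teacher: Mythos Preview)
Your proposal has a genuine gap at the step ``If $\alpha$ does exist, then $y\in\d\Omega_{\alpha}$.'' Knowing $y\in\d\Omega(x)$ only tells you $y$ is not interior to any $\Omega_{\beta}$; it does \emph{not} put $y$ in $\cnj{\Omega_{\alpha}}$. The containment $\Omega(\alpha)\subseteq 2B_{\alpha}$ from \eqref{e:o,imb2} goes the wrong way: $y\in 2B_{\alpha}$ can easily hold with $y\notin\cnj{\Omega_{\alpha}}$. The clearest counterexample is any $y\in E$: such a point lies in $\d\Omega(x)\cap 2B_{\alpha}$ for the appropriate $\alpha$ at every generation, yet $y\in\d\Omega$ while each $\Omega_{\alpha}$ is a finite union of dilated Whitney cubes at distance $\sim\ve^{|\alpha|}$ from $\d\Omega$, so $y\notin\cnj{\Omega_{\alpha}}$ for any finite $\alpha$. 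Your junction-handling paragraph does not cover this, and more generally $y$ may sit on $\d\Omega_{\beta}$ for some $\beta>\alpha$ with $|\beta|$ much larger than $k$, in which case neither the generation-$k$ nor the generation-$(k+1)$ CAD corkscrew of a single piece is available at $y$.

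The paper sidesteps this by splitting on the ratio $r/\delta_{\Omega}(z)$ rather than by generation. When $r\lec\delta_{\Omega}(z)$, it uses directly that $\d\Omega(x)$ is locally a union of faces of dilated dyadic cubes, so $B(z,r)\backslash\Omega(x)$ contains a corner region of comparable size---no CAD property of any $\Omega_{\alpha}$ is invoked. When $r\gec\delta_{\Omega}(z)$, it first moves to a nearby point $w\in E$ (which is always within $O(\delta_{\Omega}(z))$ of $z$), picks the largest $\Omega_{\alpha}$ along the chain to $w$ that fits inside $B(w,r/4)$, and then observes that $3B_{\alpha}\cap\Omega(x)$ is exactly $\Omega(\alpha)$ together with a single thin parent tentacle; the sparseness of this set in $3B_{\alpha}$ leaves room for a ball of radius $\sim\ve^{|\alpha|}\sim r$ in the complement. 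In particular the paper never appeals to the exterior corkscrew property of an individual $\Omega_{\alpha}$, which is what your argument tries to leverage but cannot, because $y$ need not lie on $\d\Omega_{\alpha}$.
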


\begin{proof}
Let $z\in \d\Omega(x)$ and $r>0$. We divide into some cases:

\noindent {\bf Case 1:} If $r\geq 2 \diam \Omega(x)$, then we can clearly find a corkscrew ball in $B(z,r)\backslash \Omega(x)$ of radius $r/4$. 

\noindent {\bf Case 2:} Assume $r<2\diam \Omega$. Let $C>0$, we will decide its value soon. 

\noindent {\bf Case 2a:} Suppose $0<r<C\delta_{\Omega}(z)$, then $z\in \d \lambda Q$ for some Whitney cube $Q\in \cC$. Note that for $\rho>0$ small enough (depending on $N$ and $n$), $\d\Omega(x)\cap B(z,\rho \ell(Q))\backslash\Omega(x)$ is isometric to $B(0,\rho \ell(Q))\backslash \{y:y_{i}\geq 0\}_{i\in S}$ for some subset  $S\subseteq \{1,...,n\}$, hence we can find a ball of radius $\rho \ell(Q)/4\subseteq B(z,\rho \ell(Q))\backslash \Omega(x)$. 

By the properties of Whitney cubes, 
\[
r<C\delta_{\Omega}(z)\sim C\ell(Q)\lec_{\rho} C\rho \ell(Q)/4,\]
This means the ball is a corkscrew ball for $B(z,r)$ with respect to $\Omega(x)$. 

\noindent {\bf Case 2b:} Now suppose $r\geq C\delta_{\Omega}(z)$. Note that if $Q\in \cC$, then $Q\in \cC_{\alpha}$ for some $\beta$, and by \eqref{e:o,imb2}, if $z\in \d \lambda Q$,
\[
\dist(z,E)
\leq \diam 2B_{\beta}. \]

Also note that $Q$ has side length comparable to every other cube in $\cC(\beta)$ (since $\Omega_{\beta}$ is a finite connected union of dilated Whitney cubes), so in particular, if $R\in \cC(\beta)$ is such that $x_{\beta}\in \lambda R$,
\[
\delta_{\Omega}(z)\sim \ell(Q) \sim \ell(R)\sim \delta_{\Omega}(x_{\beta})=  |x_{\beta}-\xi_{\beta}|\sim \diam B_{\beta} .
\]
Combining the above inequalities, we get 
\[
\dist(z,E)\lec \delta_{\Omega}(z)\leq r/C,
\]
 so for $C$ large enough,
 \[
 \dist(z,E)<r/2.
 \]
 Hence, we can pick $w\in E\cap B(z,r/2)$. Note there is a sequence $\alpha_{k}$ so that $|\alpha_{k}|=k$, $\alpha_{k}\leq \alpha_{k+1}$, and $x_{\alpha_{k}}\rightarrow w$. Let $\alpha=\alpha_{k}$ be so that
 \[
 \diam \Omega_{\alpha} = \max\{ \diam \Omega_{\alpha_{k}}:  \Omega_{\alpha_{k}}\subseteq B(w,r/4)\}.
 \]
Since $\diam \Omega(\alpha)\sim  \diam B_{\alpha}= 2\ve^{|\alpha|}$ and $r<2\diam \Omega(x)$, it follows that $\diam \Omega(\alpha) \sim_{\ve} r$. 
 
 Note that if $\alpha'=\alpha_{k-1}$ is the parent of $\alpha$, then by \eqref{e:spaced} and \eqref{e:far}, and because the $I_{\alpha _{j}}$ are mutually spaced apart by distance at least $M\ve^{|\alpha|}$, we have for $\ve>0$ small enough and $M$ large enough (and here we fix $M$)
 
 \[
 3B_{\alpha}\cap \Omega(x)
 =\Omega(\alpha)\cup \bigcup\{\lambda Q: Q\in W, \;\; Q\cap I_{\alpha'}\neq\emptyset\}.
 \]
 Hence, for $\rho>0$ and $N$ large enough depending on $\rho$, and as $\Omega(\alpha)\subseteq 2B_{\alpha}$,
 \[
\sup\{\dist(y, 2B_{\alpha}\cup I_{\alpha'}): y\in 3B_{\alpha}\cap \Omega(x)\}
<\rho \ve^{|\alpha|}
.\]
For $\rho$ small enough, this means there is $B^{\alpha}\subseteq 3B_{\alpha}\backslash \Omega(x)$ of radius $\ve^{|\alpha|}/4\sim_{\ve} r$, so this in turn will be an exterior corkscrew for $\Omega(x)$ in $B(w,r/2)$.

\end{proof}

\begin{lemma}
$\d\Omega(x)$ is Ahlfors $(n-1)$-regular.
\end{lemma}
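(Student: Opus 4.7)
The plan is to establish the two-sided inequality $c r^{n-1} \leq \cH^{n-1}(\d\Omega(x) \cap B(z,r)) \leq C r^{n-1}$ for every $z \in \d\Omega(x)$ and $0 < r < \diam \d\Omega(x)$, with constants depending on $\ve$, $\lambda$ and $n$. The \emph{lower bound} is a standard consequence of the two-sided corkscrew property just established: pick an interior corkscrew ball $B^{\mathrm{int}} \subseteq \Omega(x) \cap B(z,r/2)$ and an exterior corkscrew ball $B^{\mathrm{ext}} \subseteq B(z,r/2) \setminus \Omega(x)$, each of radius $\sim r$, and let $\pi$ be the orthogonal projection onto the hyperplane perpendicular to the segment joining their centers. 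Then $\pi(B^{\mathrm{int}})$ and $\pi(B^{\mathrm{ext}})$ coincide as a single $(n-1)$-dimensional ball of radius $\sim r$, and every fiber of $\pi$ through this ball must cross $\d\Omega(x)$ inside $B(z,r)$. Since $\pi$ is $1$-Lipschitz and therefore cannot increase Hausdorff measure, $\cH^{n-1}(\d\Omega(x) \cap B(z,r)) \geq \cH^{n-1}(\pi(\d\Omega(x) \cap B(z,r))) \gec r^{n-1}$.

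For the \emph{upper bound} I would split according to whether $r \leq c_1 \delta_\Omega(z)$. In that regime the local analysis from Case 2a of the preceding lemma identifies $\d\Omega(x) \cap B(z,r)$ with a bounded union of pieces of planar faces of dilated Whitney cubes of side length $\geq r$, so its measure is immediately $\lec r^{n-1}$. Otherwise $r > c_1 \delta_\Omega(z)$, and one finds $w \in E$ with $|w-z| \lec r$ and works inside $B(w, C'r)$. Let $k_0$ be the integer with $\ve^{k_0+1} < r \leq \ve^{k_0}$; by \eqref{e:far-same-gen}, once $M$ has been fixed large, the number of level-$k_0$ multi-indices $\beta$ with $2B_\beta \cap B(w, C'r) \neq \emptyset$ is bounded independently of $r$ (call this number $L$). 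Every contribution to $\d\Omega(x) \cap B(w, C'r)$ from scales below $\ve^{k_0}$ lies in the boundary of some $\Omega_\gamma$ for $\gamma$ a descendant of one of those $\beta$; each such $\Omega_\gamma$ is a uniformly finite union of dilated Whitney cubes of side length $\sim_\ve \ve^{|\gamma|}$, so $\cH^{n-1}(\d\Omega_\gamma) \lec_\ve \ve^{|\gamma|(n-1)}$, while the branching bound $n_\gamma \leq 2\ve^{-t}$ from \eqref{e:nk} provides at most $L(2\ve^{-t})^{k-k_0}$ level-$k$ descendants. Summing gives
\[
\cH^{n-1}(\d\Omega(x) \cap B(w, C'r))
\lec_\ve L\,\ve^{k_0(n-1)} \sum_{j \geq 0} \ps{2\ve^{n-1-t}}^{j} \lec_\ve r^{n-1},
\]
provided $\ve$ is chosen small enough that $2\ve^{n-1-t} < 1$; this is possible precisely because $t < s \leq n-1$ forces $n-1-t > 0$.

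The main obstacle is ruling out a spurious logarithmic factor from the coarse-scale ancestors at levels $k < k_0$. The saving is geometric: the exposed part of $\d\Omega_{\alpha_k}$ lies at distance $\gec \ve^{k+1}$ from the tentacle tip $x_{\alpha_{k+1}}$, because the neighborhood of the tip is covered by $\Omega_{\alpha_{k+1}}$ and therefore belongs to the interior of $\Omega(x)$. Since $|w - x_{\alpha_{k+1}}| \lec \ve^{k+1}$ along the ancestor chain converging to $w$, once $k_0 - k$ exceeds a constant depending only on $\ve$ the set $\d\Omega_{\alpha_k} \cap \d\Omega(x)$ is disjoint from $B(w, C'r)$ altogether. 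Only $\ve$-boundedly many coarse levels can contribute at all, and on each such level the intersection with $B(w, C'r)$ is a bounded union of flat pieces of measure $\lec r^{n-1}$—an additive contribution that is absorbed into the final bound.
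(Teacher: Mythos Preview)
Your argument follows the paper's closely. The lower bound via projection onto the hyperplane orthogonal to the segment joining the two corkscrew centers is exactly what the paper does, and your upper bound rests on the same geometric series: at the scale $k_0$ determined by $r$, only boundedly many subtrees $\Omega(\beta)$ with $|\beta|=k_0$ meet the ball, each $\d\Omega_\gamma$ has $\cH^{n-1}$-measure $\lec \ve^{|\gamma|(n-1)}$, the branching factor is $\leq 2\ve^{-t}$, and the series $\sum_j (2\ve^{n-1-t})^j$ converges because $t<n-1$. The paper compresses all of this into a few lines and simply asserts that the ball ``touches only $\overline{\Omega(\alpha)}$'' for a single level-$k$ index before summing over descendants of its parent $\alpha'$; your explicit case split on $r$ versus $\delta_\Omega(z)$ and your separate treatment of coarse ancestors make visible work the paper elides.

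One small repair to your ancestor paragraph: the reasoning ``exposed part of $\d\Omega_{\alpha_k}$ is $\gec \ve^{k+1}$ from $x_{\alpha_{k+1}}$'' combined with ``$|w-x_{\alpha_{k+1}}|\lec \ve^{k+1}$'' does not by itself force the exposed boundary away from $w$, since both distances are of the same order. The cleaner observation is that every point of $\overline{\Omega_{\alpha_k}}$ lies at distance $\gec \ve^{k+1}$ from $\d\Omega$ (the tentacle segments stay $\ge \ve^{k+1}$ from $\d\Omega$ by the definition of $h_{\alpha i}$, and the surrounding dilated Whitney cubes inherit this with constants depending on $N,\lambda$). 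Since $w\in E\subseteq \d\Omega$, this gives $\dist(\overline{\Omega_{\alpha_k}},w)\gec \ve^{k+1}$ directly; for $k\le k_0-C(\ve)$ this exceeds $C'r$, and your conclusion follows.
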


\begin{proof}
Let $z\in \d\Omega(x)$ and $0<r<\diam \Omega(x)$. The interior and exterior corkscrew conditions imply lower regularity; this is standard, but it's short enough to include here: We know there are balls $B(x_{1},c r)\subseteq  B(z,r)\cap \Omega(x)$ and $B(x_{2},c r)\subseteq B(z,r)\backslash \Omega(x) $ with $c=c(\ve,n)$. If $U$ is the $(n-1)$-dimensional plane perpendicular to $x_1-x_2$ passing through $0$ and $P$ is the orthogonal projection onto $U$, then
\begin{multline*}
\cH^{n-1}(B(z,r)\cap \d\Omega(x))
 \geq \cH^{n-1}(P(B(z,r)\cap \d\Omega(x)))\\
 \geq \cH^{n-1}(P(B(x_{1},cr))\cap U)=\cH^{n-1}(B(P(x_{1}),cr)\cap U)
 \gec_{c,d} r^{n-1}.
\end{multline*}

Now we prove upper regularity.  Again, as in the proof of Lemma \ref{l:frostmann}  if $k$ is such that $\frac{M}{4}\ve^{k}>\diam B\geq \frac{M}{4}\ve^{k+1}$, then there is at most one $2B_{\alpha}$ with $|\alpha|=k$ intersecting $B$. Hence, $B$ touches only $\cnj{\Omega(\alpha)}$.

Each $\d\Omega_{\alpha}$ is already Ahlfors regular and $\cH^{n-1}(\d\Omega_{\alpha})\lec \ve^{|\alpha|(n-1)} $. By \eqref{e:nk} there are $n_{|\alpha'|}\cdots n_{k-1}$ many descendants $\beta$ of $\alpha'$ with $|\beta|=k$, and 
\[
n_{|\alpha'|}\cdots n_{k-1}
\stackrel{\eqref{e:nk}}{<}(2\ve^{-t})^{k-1-|\alpha'|+1}= (2\ve^{-t})^{k-|\alpha|+1}.
\]
Thus, for $\ve>0$ small enough depending on $n$ and $t$,
\begin{align*}
\cH^{n-1}(B(z,r)\cap \d\Omega(x))
& = \cH^{n-1}(B(z,r)\cap \d\Omega(\alpha'))
 \leq \sum_{\beta\geq \alpha'} \cH^{n-1}(\d\Omega_{\beta})\\
& \lec \sum_{k\geq |\alpha'|}\ve^{k(n-1)} \cdot (2\ve^{-t})^{k-|\alpha|+1}\\
& = 2^{-|\alpha|+1} \ve^{t(|\alpha|-1)} \sum_{k\geq |\alpha'|} \ve^{k(n-1-t)}2^{k}\\
& \lec_{\ve} 2^{-|\alpha|} \ve^{t|\alpha|} \cdot \ve^{|\alpha'|(n-1-t)}2^{|\alpha'|}
\lec \ve^{|\alpha|(n-1)}
\lec r^{n-1}.
\end{align*}

\end{proof}

The combination of the previous four lemmas prove Theorem \ref{t:main}.
%
%
%
%
%
%
%
%
%
%
%
%

\bibliographystyle{alpha}

\newcommand{\etalchar}[1]{$^{#1}$}
\def\cprime{$'$}

\end{document}